\newtheorem{proposition}{Proposition}
\newtheorem{theorem}[proposition]{Theorem}
\newtheorem{lemma}[proposition]{Lemma}
\newtheorem{conjecture}[proposition]{Conjecture}
\theoremstyle{remark}
\newtheorem{example}[proposition]{Example}
\newtheorem{remark}[proposition]{Remark}
\theoremstyle{definition}
\newtheorem{definition}[proposition]{Definition}
\newcommand{\rk}{{\rm rk}}
\newcommand{\z}{\mathbbm Z}
\renewcommand{\r}{\mathbbm R}
\renewcommand{\c}{\mathbbm C}
\newcommand{\n}{\mathbbm N}
\newcommand{\dirac}{\cancel{\partial}}
\newcommand{\yc}{\Yvcentermath1}
\begin{document}

\title[The decomposition of the spinor bundle of Grassmann manifolds]{The decomposition of the spinor bundle of Grassmann manifolds}
\author{Frank Klinker}
\date{\today}
\address{Department of Mathematics, University of Dortmund, D--44221 Dortmund} 
\address{frank.klinker@mathematik.uni-dortmund.de}
\begin{abstract} 
The decomposition of the spinor bundle of the spin Grassmann manifolds  $G_{m,n}=SO(m+n)/SO(m)\times SO(n)$ into irreducible representations of $\mathfrak{so}(m)\oplus\mathfrak{so}(n)$ is presented. A universal construction is developed and  the general statement is proven for $G_{2k+1,3}$, $G_{2k,4}$, and $G_{2k+1,5}$ for all $k$.
The decomposition is used to discuss properties of the spectrum and the eigenspaces of the Dirac operator.
\end{abstract}
\subjclass[2000]{
17B10, 
14M14, 
53C27 
}
\keywords{spinor representation, branching rule, Grassmann manifold, Dirac operator}
\maketitle

\section{Introduction}

The discussion of the spectrum of differential operators on spin symmetric and spin homogeneous spaces has been part of the literature for many years (see for example, \cite{CampoPedon},\cite{Partha},\cite{Seeger1},\cite{Strese1} or \cite{Milhorat1}). This topic brings together different aspects of geometry and representation theory, such as existence of spin structures on homogeneous spaces  (e.g., \cite{CahenGutt} or \cite{LyakMudrov}) and  branching rules for representations  (e.g., \cite{VasilLyak}, \cite{MacdonaldBook}, \cite{King1982}, \cite{Milhorat3}, \cite{HoweTanWillen}, \cite{KoikeTerada1}, \cite{Koike1}, \cite{KoikeTerada4}, \cite{CastilhoAlcaras}). In particular the work of Parthasarathy \cite{Partha} yields an important theoretical tool to describe the spectrum and the eigenspaces of the Dirac operator of a spin symmetric space $G/K$. It may roughly be summarized as follows. 
In the first step, decompose the spinor bundle of $M$ in irreps of $K$. 
In the second step, list all $G$-representation which decomposition with respect to  $K$ admits a summand from the list obtained in the first  step. 
In \cite{Milhorat1} and \cite{Milhorat2} these tools have been noticed to be very powerful  for the discussion of the first eigenvalue of the Dirac operator.
Nevertheless the practical application of the theoretical tools  contains many  difficulties which are of course the reason why most authors, including ourselves, restrict to examples. 
If we consider symmetric spaces, in particular where both parts are of the same rank, the second step has mainly been solved for the classical groups in the literature cited above. For the first step we need branching rules for the isotropy group  of $M$ with respect to the subgroup $K$. Therefore this step is more sophisticated in so far as the rank difference between $K$ and  the isotropy group is big, in general. For example, this difference is $\text{rk}(\mathfrak{so}(4k\ell))-\text{rk}(\mathfrak{so}(2k)\oplus\mathfrak{so}(2\ell))=2k\ell-k-\ell$ for the Grassmannian $G_{2k,2\ell}$. 

In this text we prove a formula for the  decomposition of the spinor bundle of the  spin Grassmannians $G_{m,n}=SO(m+n)/SO(m)\times SO(n)$ for $n\leq 5$. The construction also  yields the decomposition of the spinor bundle of $G_{m,n}$ in the general case. See theorem \ref{decspinorgrassl2} and theorem \ref{decspinorgrassoddpart} as well as  conjecture \ref{decspinorgrass} and conjecture \ref{decspinorgrassodd}. 
This decomposition can be rephrased as branching of the spin representation of $\mathfrak{so}(mn)$ with respect to the subalgebra $\mathfrak{so}(m)\oplus\mathfrak{so}(n)$ for which there are  no general statement proven so far but only partial results for $n=1,2$ (see, e.g., \cite{Strese1}). 
Our proof needs the explicit construction of the weights cf.\ (\ref{matrix}) and (\ref{images}) but also a dimension analysis.  
For the case $(m,n)=(2k,4)$  this dimension analysis is reformulated in lemma \ref{lemma4}. Here as well as in the odd dimensional cases with $n\leq 5$ we need closed expressions for  sums over binomial terms  which we prove in appendix \ref{applemma}. Why and how the techniques provided in the proofs for $n\leq 5$ can be used for the general case and what the practical difficulties are, is explained in section \ref{outlook}. 
Nevertheless for fixed $(m,n)$  one can let any computer algebra system -- for example, {\sc maple} -- do the dimension calculation to tell one that the decomposition results are right in all these cases. 
But this is not the only  evidence  of the correctness  of our general result. In  section \ref{diracsection} we discuss some aspects of the spectrum and the eigenspaces of the Dirac operator on Grassmann manifolds. 
We compare  our results of section \ref{sectionX} with the results of \cite{Milhorat1} and \cite{Milhorat2}. The  perfect match also substantiates  the statement on the decompositions (\ref{dec}) and (\ref{decodd})  in the general case. As a further result we identify the smallest summand of the eigenspace to the first eigenvalue of the Dirac operator, see propositions \ref{repsmall2con} and \ref{smallcontrl2} and conjectures \ref{repsmallgencon} and  \ref{smallcontr}.  Small in this situation means with respect to the ordering which is induced by the ordering of weights. We show that this eigenspace is nondegenerate in the case $G_{4,4}$ (see example \ref{exampleexpansion}).

The text is organized as follows. In section \ref{sec2} we recall the theoretical basis and explain the projection method at a well known simple example before we use this method to decompose the spinor bundle of the Grassmannians in section \ref{sectionX}. In section \ref{diracsection} we turn to the discussion of the Dirac operator and its spectrum and end up with some concluding remarks in section \ref{outlook}.

\section{Decomposing the spinor representation of ${G/K}$}\label{sec2}

\subsection{Symmetric spaces: The Parthasarathy formula}\label{11}

Consider a homogeneous  spaces $M= G/K$, $K\subset G$, $\mathfrak{g}=\mathfrak{k}\oplus\mathfrak{p}$ together with its $G$-invariant Riemannian metric induced by the Killing form $b$.\footnote{For $M$  isotropy irreducible we get an Einstein space with scalar curvature $s=\frac{\dim M}{2}$.} Then 
\begin{gather*}
\mathfrak{p}=\big\{v\in\mathfrak{g}|\,b(v,h)=0\ \forall h\in K\big\}\,,\\
\big[\mathfrak{k},\mathfrak{k}\big]\subset\mathfrak{k}\,,
\quad \big[\mathfrak{k},\mathfrak{p}\big]\subset\mathfrak{p}\,,
\quad \big[\mathfrak{p},\mathfrak{p}\big]\subset\mathfrak{k}\oplus \mathfrak{p}\,.
\end{gather*}
Suppose $M$ is a symmetric space.  Then, in particular, the bracket of $\mathfrak{p}$ with itself closes into $\mathfrak{k}$. 
Suppose $\mathfrak{k}$ and $\mathfrak{g}$ are of the same rank.\footnote{This is true in almost all cases of symmetric spaces up to two series, see, for example, \cite{BookHelgason}.}

Let $\zeta:K\to SO(\mathfrak{p})$ be the isotropy representation of $M$. $\zeta$ induces a representation $\zeta_*$ on Lie algebra level given by
\begin{equation}
\zeta_*:\mathfrak{k}\to\mathfrak{so}(\mathfrak{p}),\quad 
\zeta_*(h)(v)=(ad^\mathfrak{g}_h)\big|_{\mathfrak{p}}(v)={\rm proj}_{\mathfrak{p}}[h,v]\,.
\end{equation}
\begin{remark}
$M=G/K$  admits a $G$-invariant spin structure if and only if $\zeta$ lifts to $\tilde\zeta:K\to Spin(\mathfrak{p})$.
\end{remark}
 Let $\{e_i\}$ be an orthonormal basis of $\mathfrak{p}$. Then $\zeta_*$ and $\tilde\zeta_*:\mathfrak{k}\to\mathfrak{so}(\mathfrak{p})$  are connected via
\begin{equation}
\tilde\zeta_*(h)=\frac{1}{4}\sum e_i \cdot \zeta_*(h)(e_i)\,,
\end{equation}
where $\cdot$ denotes the Clifford multiplication in $C\ell(\mathfrak{p})$. We denote by $\gamma: Spin(\mathfrak{p}) \to End(\Delta)$ the spinor representation and write $\rho:=\gamma\circ\tilde\zeta$. The following construction is due to \cite{Partha} and has been used to calculate the eigenvalues of Dirac operators, see section \ref{diracsection}.

Let $S=G\times_\rho \Delta$ be the spinor bundle of $M=G/K$, where $G$ is viewed as a principle bundle over $M$.
$S$ splits under the action of $\mathfrak{k}$ into certain subbundles which are labeled by an index set $\mathcal{W}_0$
\begin{equation}\label{spinordec}
S=\bigoplus_{\sigma\in\mathcal{W}_0}S_\sigma\,.
\end{equation}
Let $\Phi^+_\mathfrak{g}$ and $\Phi^+_\mathfrak{k}$ be the  the sets of  $\mathfrak{g}$-positive roots and  $\mathfrak{k}$-positive roots, respectively. We define 
$\Phi^+_\mathfrak{p}:=\Phi^+_\mathfrak{g}\setminus \Phi^+_\mathfrak{k}$ and
\begin{equation}\label{halfsums}
\alpha_\mathfrak{g}:=\frac{1}{2}\sum_{\alpha\in\Phi^+_\mathfrak{g}}\alpha,\quad
\alpha_\mathfrak{k}:=\frac{1}{2}\sum_{\alpha\in\Phi^+_\mathfrak{k}}\alpha\,.
\end{equation} 
Let $\mathcal{W}$ be the Weyl group of $\mathfrak{g}$. Then $\mathcal{W}_0$ is given by
\begin{equation}
\mathcal{W}_0=\big\{\sigma\in\mathcal{W}|\,\Phi^+_\mathfrak{k}\subset \sigma\Phi^+_\mathfrak{g} \big\}\,.
\end{equation}
The spinor representation $\rho$ decomposes as 
$\rho=\sum_{\sigma\in\mathcal{W}_0}\rho_{\sigma}$,
where 
\begin{equation}\label{Pat}
\beta_\sigma =\sigma\alpha_\mathfrak{g}-\alpha_\mathfrak{k}=\frac{1}{2}\sum_{\alpha\in\sigma\Phi^+_\mathfrak{p}}\alpha
\end{equation}
 is the highest weight of the irreducible representation $\rho_{\sigma}$ of $K$. The latter appears with multiplicity one in the sum. The corresponding representation space $S_{\sigma}$ is of dimension
\begin{equation}
\dim S_{\sigma} 
=\sum_{\alpha\in\Phi^+_\mathfrak{k}}\frac{\langle \beta_\sigma+\alpha_\mathfrak{k},\alpha \rangle}{\langle\alpha,\alpha\rangle} 
=\sum_{\alpha\in\Phi^+_\mathfrak{k}}\frac{\langle \sigma\alpha_\mathfrak{g} ,\alpha \rangle}{\langle\alpha,\alpha\rangle} \,.
\end{equation}

\begin{example}
We consider $\mathfrak{g}=\mathfrak{so}(2n+1) \supset \mathfrak{so}(2n)=\mathfrak{k}$. 
The embedding is due to  the usual $\mathbf{(2n\!+\!1)}=\mathbf{2n}\,\oplus\,\mathbf{1}$ splitting. 
The roots of $\mathfrak{so}(2n+1)$ are $\big\{\pm e_k,\pm e_i\pm e_j\big\}_{1\leq k\leq n,1\leq i<j\leq n}$ and those of $\mathfrak{so}(2n)$ are given by the subset $\big\{\pm e_i \pm e_j \big\}_{1\leq i<j\leq n}$. 
The positive roots are 
$\Phi^+_{\mathfrak{so}(2n+1)}=\big\{e_k ,e_i\pm e_j \big\}_{1\leq k\leq n, 1\leq i<j\leq n}$ and 
$\Phi^+_{\mathfrak{so}(2n)}=\big\{e_i\pm e_j\big\}_{1\leq i<j\leq n}$ respectively. So 
$\Phi^+_\mathfrak{p}=\big\{e_k\big\}_{1\leq k\leq n}$. 
In particular 
$\alpha_{\mathfrak{so}(2n+1)}=\sum_{i=1}^{n}(n-i+\frac{1}{2})e_i$ and 
$\alpha_{\mathfrak{so}(2n)}=\sum_{i=1}^{n}(n-i)e_i $. 
Beside the identity, the only Weyl reflection, $\sigma$, which obeys 
$\sigma\Phi^+_{\mathfrak{so}(2n+1)}\supset\Phi^+_{\mathfrak{so}(2n)}$ 
is the one associated to the root $e_n$. 
This yields $\sigma\Phi_\mathfrak{p}^+=\big\{e_k,-e_n\,|\,k=0,\ldots n-1\big\}$. 
Therefore, the spinor representation decomposes into two summands associated to the two highest weights
\[
\tfrac{1}{2}\big(e_1+\cdots e_{n-1}\pm e_n\big)
\]
as expected.
\end{example}

A  generalization of the above construction by Parthasarathy is given by the following nice observation, see \cite{FeganSteer} or \cite{Goette1}.
\begin{proposition}\label{symmnotequalrk}
Let $M=G/K$ be a symmetric space and $\mathfrak{g}$, $\mathfrak{k}$ semisimple with rank difference $d=\rk(\mathfrak{g})-\rk(\mathfrak{k})$.
Let $\Pi$ be the set of highest weights of irreducible representations of $\mathfrak{k}$ which appear in the decomposition of the spinor representation of $M$. Then
\[
S=2^{[\frac{d}{2}]}\sum_{\beta\in\Pi}V_{\beta}\,.
\]
\end{proposition}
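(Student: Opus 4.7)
The plan is to reduce to the equal-rank Parthasarathy formula recalled above, by enlarging $\mathfrak{k}$ to a subalgebra of $\mathfrak{g}$ of full rank.

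First, I would fix a Cartan subalgebra $\mathfrak{h}_\mathfrak{k}\subset\mathfrak{k}$ and extend it to a $\theta$-stable Cartan subalgebra $\mathfrak{h}_\mathfrak{g}=\mathfrak{h}_\mathfrak{k}\oplus\mathfrak{a}$ of $\mathfrak{g}$, where $\theta$ is the involution with $\mathfrak{k}=\mathfrak{g}^\theta$ and $\mathfrak{a}\subset\mathfrak{p}$ has dimension $d$. The key structural observation, which I would verify next, is that $\mathfrak{a}$ centralizes the whole of $\mathfrak{k}$: for $h\in\mathfrak{a}$ and a $\mathfrak{k}$-root vector $X_\alpha$, the bracket $[h,X_\alpha]=\alpha(h)X_\alpha$ lies in $\mathfrak{k}$, but it also lies in $[\mathfrak{p},\mathfrak{k}]\subset\mathfrak{p}$, forcing $\alpha|_\mathfrak{a}=0$. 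Hence $\mathfrak{k}':=\mathfrak{k}\oplus\mathfrak{a}$ is a reductive subalgebra of $\mathfrak{g}$ of the same rank as $\mathfrak{g}$, and $(\mathfrak{g},\mathfrak{k}')$ is again a symmetric pair with complement $\mathfrak{p}':=\mathfrak{a}^\perp\cap\mathfrak{p}$, since $[\mathfrak{p}',\mathfrak{p}']\subset[\mathfrak{p},\mathfrak{p}]\subset\mathfrak{k}\subset\mathfrak{k}'$.

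Next, I would use this to factor the spin module. Because $\zeta_*(\mathfrak{k})$ annihilates $\mathfrak{a}$ by the previous point, the orthogonal splitting $\mathfrak{p}=\mathfrak{a}\oplus\mathfrak{p}'$ is $\mathfrak{k}$-equivariant with trivial action on $\mathfrak{a}$. Consequently
\[
\Delta(\mathfrak{p})\;\cong\;\Delta(\mathfrak{a})\otimes\Delta(\mathfrak{p}')
\]
as $\mathfrak{k}$-modules, where $\Delta(\mathfrak{a})$ is a trivial module of dimension $2^{[d/2]}$ and $\mathfrak{k}$ acts on the second factor through the spin cover of $\mathfrak{so}(\mathfrak{p}')$. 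Then I would invoke Parthasarathy's theorem, now for the equal-rank pair $(\mathfrak{g},\mathfrak{k}')$, to get $\Delta(\mathfrak{p}')=\bigoplus_{\sigma\in\mathcal{W}_0}V_{\beta_\sigma}$ as $\mathfrak{k}'$-modules with unit multiplicities, where $\beta_\sigma=\sigma\alpha_\mathfrak{g}-\alpha_\mathfrak{k}$. Restricting each $V_{\beta_\sigma}$ back to $\mathfrak{k}$ keeps it irreducible, since $\mathfrak{a}$ acts only by a character, and produces the $\mathfrak{k}$-irreducible with highest weight $\beta_\sigma|_{\mathfrak{h}_\mathfrak{k}}$.

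Combining these two steps yields $S\cong 2^{[d/2]}\bigoplus_{\sigma\in\mathcal{W}_0}V_{\beta_\sigma|_{\mathfrak{h}_\mathfrak{k}}}$. The main obstacle I foresee is the final bookkeeping: to match this with $2^{[d/2]}\sum_{\beta\in\Pi}V_\beta$ one has to check that the restriction map $\sigma\mapsto\beta_\sigma|_{\mathfrak{h}_\mathfrak{k}}$ is injective on $\mathcal{W}_0$, so that each weight in $\Pi$ receives multiplicity exactly $2^{[d/2]}$ and not a higher multiple. This injectivity should come from the description of $\mathcal{W}_0$ as a set of minimal coset representatives for $\mathcal{W}_\mathfrak{k}\backslash\mathcal{W}_\mathfrak{g}$ combined with the regularity of $\alpha_\mathfrak{g}$, but pinning it down is the technical heart of the argument.
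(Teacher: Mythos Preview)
The paper does not prove this proposition; it merely cites \cite{FeganSteer} and \cite{Goette1}. Your proposed argument, however, contains a genuine error at the second step.

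You claim that $\mathfrak{a}\subset\mathfrak{p}$ centralizes all of $\mathfrak{k}$, arguing that for $h\in\mathfrak{a}$ and a $\mathfrak{k}$-root vector $X_\alpha$ one has $[h,X_\alpha]=\alpha(h)X_\alpha$. But $\alpha$ is a root of $\mathfrak{k}$ with respect to $\mathfrak{h}_\mathfrak{k}$, not of $\mathfrak{g}$ with respect to $\mathfrak{h}_\mathfrak{g}$; there is no reason for $X_\alpha$ to be an eigenvector of $ad(h)$ when $h\in\mathfrak{a}$. All you actually know is that $[h,X_\alpha]\in[\mathfrak{p},\mathfrak{k}]\subset\mathfrak{p}$ and that it lies in the $\alpha$-weight space of $\mathfrak{p}$ under $\mathfrak{h}_\mathfrak{k}$, which need not vanish.

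In fact your conclusion is false in essentially every interesting case. If $\mathfrak{a}$ centralized $\mathfrak{k}$, then $\mathfrak{a}$ would be a $d$-dimensional \emph{trivial} $\mathfrak{k}$-submodule of $\mathfrak{p}$, impossible whenever the isotropy representation is irreducible and $d\geq 1$. Concretely: for $SU(3)/SO(3)$ one has $d=1$, yet $\mathfrak{p}$ is the irreducible $5$-dimensional $\mathfrak{so}(3)$-module; for $M=G\times G/G$ (the paper's very next example) $\mathfrak{p}$ is the adjoint module of $\mathfrak{g}$ and has no trivial summand. Consequently $\mathfrak{k}':=\mathfrak{k}\oplus\mathfrak{a}$ is not a subalgebra, the splitting $\mathfrak{p}=\mathfrak{a}\oplus\mathfrak{p}'$ is not $\mathfrak{k}$-equivariant, and the factorization $\Delta(\mathfrak{p})\cong\Delta(\mathfrak{a})\otimes\Delta(\mathfrak{p}')$ fails as $\mathfrak{k}$-modules. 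The whole reduction to an equal-rank pair collapses. The arguments in the cited references use a different mechanism --- roughly, a weight-multiplicity analysis via the zero-weight space of $\mathfrak{p}$ under $\mathfrak{h}_\mathfrak{k}$ (which is not $\mathfrak{k}$-invariant) rather than a $\mathfrak{k}$-equivariant splitting.
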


\begin{example}
Consider $M=G\times G / G$ with $\dim G=\dim M =n$, $\rk(G)=k$. In this case the isotropy representation is the adjoint action so that $V_{\mathfrak{so}(n)}=ad_\mathfrak{g}$. The rank difference is exactly the rank of $G$ and the spinor representation is
\[
S= 2^{[\frac{k}{2}]}V_{\alpha_\mathfrak{g}}\,,
\]
with $\alpha_\mathfrak{g}$ from (\ref{halfsums}) -- see \cite{King1982}. 
\end{example}

\subsection{General homogeneous spaces: the explicit construction}

The method cf.\ Par\-tha\-sarathy  (\ref{Pat}) does only work for the decomposition of the spinor bundle of symmetric spaces.
If we want to deal with general homogeneous spaces we have to calculate the decomposition directly. 
Even in the situation of section \ref{11} it is sometimes easy to do so, because we have to compare the weights of the respective algebras in a common base anyway.

We consider the homogeneous space $M=G/K$ with faithful isotropy representation $K\to SO(\mathfrak{p})$, where $\mathfrak{g}=\mathfrak{k}\oplus\mathfrak{p}$ is the decomposition of the associated Lie algebras. We suppose $M$ to be spin such that the  isotropy representation $\zeta_*:\mathfrak{k}\to\mathfrak{so}(\mathfrak{p}) $ gives rise to the spinor representation $\rho :K \to Spin(\mathfrak{p})\subset End(\Delta)$.

{\it The construction:\ }\  We decompose $ad_\mathfrak{g}$ with respect to $\mathfrak{k}$ and get 
\begin{equation}
ad_\mathfrak{g}= ad_\mathfrak{k}\oplus \bigoplus_\alpha V_\alpha\,,
\end{equation}
where $\bigoplus_\alpha V_\alpha$ is the decomposition of $\zeta_*=ad^\mathfrak{g}\big|_{\mathfrak{k},\mathfrak{p}}$. From the construction we see that this coincides with the vector representation of $\mathfrak{so}(\mathfrak{p})$:
\begin{equation}
\text{vector representation of }\mathfrak{so}(\mathfrak{p}) \cong \bigoplus_\alpha V_\alpha\,.
\end{equation}
This information encodes the inclusion due to the following observation, see \cite[sec.~7.6]{WybourneBook}
\begin{proposition}
Let $\iota:\mathfrak{h}\to\mathfrak{g}$ be an injective homomorphism of Lie algebras with $\mathfrak{h}$ semisimple. 
Consider the irreducible vector representation of $\mathfrak{g}$. 
Then the knowledge of its decomposition into irreducible representations with respect to $\mathfrak{h}$ 
yields the knowledge of the decomposition of any irreducible representation of $\mathfrak{g}$ with respect to $\mathfrak{h}$.
\end{proposition}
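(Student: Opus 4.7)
The approach is to use that the inclusion $\iota$ induces a ring homomorphism between representation rings, and that for classical Lie algebras the tensorial part of the representation ring of $\mathfrak{g}$ is generated as a ring by the class $[V]$ of the vector representation.

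First I would invoke Weyl's construction: every tensorial irreducible $\mathfrak{g}$-module $W_\lambda$ is cut out of a suitable tensor power $V^{\otimes k}$ by a $\mathfrak{g}$-equivariant projector $\pi_\lambda$, built from a Young symmetrizer in the group algebra of the symmetric group together with, for types $B$, $C$, $D$, trace-free projections. Because the projector is $\mathfrak{g}$-equivariant it is in particular $\mathfrak{h}$-equivariant, and at the level of representation rings one obtains that $[W_\lambda]$ is expressible as a polynomial in $[V]$ with integer coefficients; equivalently, $[W_\lambda]$ lies in the subring generated by $[V]$.

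Next, the restriction map $\iota^{*}$ between representation rings is itself a ring homomorphism, so applying it to the polynomial expression for $[W_\lambda]$ gives the same polynomial with $[V]$ replaced by the assumed decomposition $\iota^{*}[V]=\sum_\alpha [V_\alpha]$, now evaluated in the representation ring of $\mathfrak{h}$. Semisimplicity of $\mathfrak{h}$ supplies Clebsch--Gordan rules intrinsic to $\mathfrak{h}$, which let us expand every product $[V_{\alpha_1}]\cdots[V_{\alpha_k}]$ into a sum of classes of $\mathfrak{h}$-irreducibles; the coefficients we read off are exactly the branching multiplicities sought.

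The principal obstacle is not computational but one of scope: the spin representations of $\mathfrak{so}(N)$ do not lie in the subring generated by $[V]$, so they require an extra input---either applying a Weyl-type construction with the spin module itself as starting point, or, as this paper does throughout, an explicit weight analysis. For all remaining classical irreducibles the scheme is algorithmic once $\iota^{*}[V]$ is in hand, the practical work reducing to bookkeeping of the trace removals in types $B$, $C$, $D$ and of the Clebsch--Gordan expansions inside $\mathfrak{h}$.
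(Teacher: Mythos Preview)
Your overall strategy---reduce to the vector representation and transport along restriction---is the right idea, and it is essentially what the paper points to (it does not give a proof, only cites Wybourne and reformulates the statement via plethysm). But one step in your argument is not correct as written and this is the place where the actual content lives.

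You claim that because $W_\lambda$ is cut out of $V^{\otimes k}$ by an equivariant projector, the class $[W_\lambda]$ is a polynomial in $[V]$ with integer coefficients, i.e.\ lies in the subring of the representation ring generated by $[V]$. This is false whenever $\mathrm{rk}\,\mathfrak{g}\geq 2$: for instance $[\Lambda^2 V]$ is not an integer polynomial in $[V]$ for $\mathfrak{so}(n)$, $n\geq 5$, since the representation ring there is a polynomial ring on the exterior powers $[\Lambda^i V]$ and these are algebraically independent. Being a direct summand of $[V]^k$ does not by itself let you isolate $[W_\lambda]$ using only sums and products of $[V]$. What is true is that $[W_\lambda]$ lies in the $\lambda$-subring generated by $[V]$: one needs the exterior-power (equivalently Adams) operations in addition to the ring structure, and restriction $\iota^*$ is a $\lambda$-ring homomorphism, not merely a ring homomorphism. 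With that correction your argument goes through for tensorial representations.

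The paper's plethysm formulation is exactly this $\lambda$-ring statement phrased on the character side: the character of $W_\lambda$ is a fixed symmetric function (Schur-type, depending on the type of $\mathfrak{g}$) in the weights of $V$, and branching amounts to substituting the $\mathfrak{h}$-weights of $V$ into that symmetric function. This is also why the paper's actual computations proceed by projecting weights rather than manipulating the representation ring, and why the spin case---which you rightly flag as outside the tensorial subring---causes no extra trouble for the method the paper employs: the weight projection is indifferent to whether the weights are integral or half-integral.
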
 

This proposition is related to the notion of plethyms  and may be formulated as follows. 
Let $\lambda_{\mathfrak{g}}$ be the vector representation of $\mathfrak{g}$, then
\[
\lambda_{\mathfrak{g}}	
	\searrow	 \sum\limits_i \lambda^{i}_{\mathfrak{h}}
\  \Rightarrow \ 
\mu_{\mathfrak{g}}
	\searrow	\sum\limits_j \mu^j_{\mathfrak{h}} = \sum\limits_i \lambda^i_{\mathfrak{h}}\bar{\otimes}\mu_{\mathfrak{g}}\,,
\]
where $\bar{\otimes}$ denotes the plethym. For a review on plethyms see, for example, \cite{LittlewoodBook} or \cite{CastilhoAlcaras}. 
This yields that we are theoretically able to calculate  the decomposition of the spinor representation of each homogeneous space. 
Nevertheless the decomposition contains many  practical difficulties. We will illustrate the explicit construction at a well known example of low rank.

\subsection{Example: Berger space ${SO(5)/SO(3)}$}  

Let $M$ be the seven dimensional Berger space. This is the  homogeneous space $SO(5)/SO(3)$, where the subgroup is characterized as follows. We consider the embedding of $\mathfrak{so}(3)$ in $\mathfrak{so}(5)$  such that the 
five dimensional vector representation  of $\mathfrak{so}(5)$ stays irreducible. 

We recall the weights of the five and four dimensional irreducible representations of $\mathfrak{so}(5)$ and $\mathfrak{so}(3)$ and the spaces associated to these representations.
\[
{\renewcommand{\arraystretch}{1.3}
\begin{array}{c|c|c|c|c}
\dim \text{of}& \multicolumn{2}{c|}{\mathfrak{so}(5)} 		& \multicolumn{2}{c}{\mathfrak{so}(3)}\\
\text{Rep.} & \text{highest weight} 		& \text{space} 	& \text{highest weight} & \text{space} \\\hline
\mathbf{5} & (1,0)=e_1 			& \r^5 		& (4)=4\lambda 	& S^2_0(\r^3) \\\hline
\mathbf{4} & (\tfrac{1}{2},\tfrac{1}{2})=\frac{1}{2}(e_1+e_2)	
					& S_{\frac{1}{2}}& (3)=3\lambda	& S_{\frac{3}{2}}
\end{array} }
\]
Here we write $\lambda$ for the highest weight of the vector representation of $\mathfrak{so}(3)$ so that the weights of $\mathfrak{so}(3)$-representations are given by $k\lambda$. 

The root system of $\mathfrak{so}(5)$ is given by $\{\pm e_1, \pm e_2,\pm e_1\pm e_2\}$.
 
To give $\lambda$ in terms of $\{e_1,e_2\}$ we use that the weight diagram of $\mathbf{5}$ 
with respect to $\mathfrak{so}(5)$ projects onto the corresponding weight diagram with respect to $\mathfrak{so}(3)$. 
The weight diagram  of $\mathbf{5}$ with respect to $\mathfrak{so}(5)$ is $\{\pm e_1,\pm e_2,0\}$. Up to symmetries we have
\begin{equation}
\begin{split}
\text{proj}_{span\{\lambda\}}\big(e_1\big)&= 4\lambda\,,\\
\text{proj}_{span\{\lambda\}}\big(e_2\big)&= 2\lambda\,,\\
\end{split}
\end{equation}
or
\begin{align*}
 \langle e_1,\lambda \rangle \frac{\lambda}{|\lambda|^2} =4\lambda   \quad &\wedge \quad 
 \langle e_2,\lambda \rangle \frac{\lambda}{|\lambda|^2} =2\lambda  \,, 
\\
 \lambda_1=4|\lambda|^2    \quad &\wedge \quad 
 \lambda_2=2|\lambda|^2    
 \,.
\end{align*}
From the last line we get  $|\lambda|^2=\frac{1}{20}$ and so
\begin{equation}
\lambda=\tfrac{1}{10}\big(2e_1+e_2\big)\,.
\end{equation}
\begin{remark}
The $\mathbf{4}$ of $\mathfrak{so}(5)$ stays irreducible as well. Its weight diagram is given by $\{\pm e_1\pm e_2\}$ and we have
$\text{proj}_{span\{\lambda\}}\big(\tfrac{1}{2}(e_1\pm e_2)\big)= (2\pm1)\lambda$.
\end{remark}
To  decompose the adjoint of $\mathfrak{so}(5)$  with respect to $\mathfrak{so}(3)$ we need the projection\footnote{We omit the projection of the origin, because its multiplicity stays the same.} of the weight diagram on $span\{\lambda\}$:
\begin{align*}
\text{proj}_{span\{\lambda\}}(e_1+e_2) & =6\lambda\,,\\
\text{proj}_{span\{\lambda\}}(e_1) & =4\lambda\,,\\
\text{proj}_{span\{\lambda\}}(e_2) & =2\lambda\,,\\
\text{proj}_{span\{\lambda\}}(e_1-e_2) & =2\lambda\,.
\end{align*}
We see that the image contains two diagrams to the highest weight $6\lambda$ and $2\lambda$, respectively, where $2\lambda$ is  the adjoint of $\mathfrak{so}(3)$ and $6\lambda$ the seven dimensional irreducible representation. Therefore
\[
\mathfrak{so}(5)=\mathfrak{so}(3)\,\oplus\,\mathbf{7}\,.
\]
The isotropy representation yields the embedding $\mathfrak{so}(3)\hookrightarrow\mathfrak{so}(7)$ and the calculations above show that this remains irreducible with respect to $\mathfrak{so}(3)$. 

To get the decomposition of the spinor representation, we need the weight diagram of $\mathfrak{so}(3)$ as subset of the weight lattice of $\mathfrak{so}(7)$. Therefore we turn to a three dimensional picture and write the  roots of $\mathfrak{so}(7)$ as $\big\{\pm e_1,\pm e_2,\pm e_3,\pm e_1\pm e_2,\pm e_1\pm e_3,\pm e_2\pm e_3\big\}$ such that the weight lattice of $\mathbf{7}$ is given by $\big\{\pm e_1,\pm e_2,\pm e_3,0\big\}$. As before we denote the highest weight of the vector representation of $\mathfrak{so}(3)$ as $\lambda$ so that the seven dimensional representation is given by $6\lambda$. We get
\begin{equation}\begin{split}
\text{proj}_{span\{\lambda\}}(e_1)&=6\lambda\,,\\
\text{proj}_{span\{\lambda\}}(e_2)&=4\lambda\,,\\
\text{proj}_{span\{\lambda\}}(e_3)&=2\lambda\,,\\
\end{split}\end{equation}
or
\begin{equation}
\lambda= \tfrac{1}{56}(6,4,2)\,.
\end{equation}
The spin representation is eight dimensional and the weights are given by $\big\{\frac{1}{2}(\pm e_1 \pm e_2 \pm e_3)\big\}$. The projection on the weight lattice of $\mathfrak{so}(3)$ is
\begin{equation*}
\text{proj}_{span\{\lambda\}}\big(\big\{\frac{1}{2}(\pm e_1\pm e_2 \pm e_3)\big\}\big) = 
\big\{ \pm6\lambda, \pm4\lambda,\pm2\lambda,0\lambda\big\}\cup\big\{0\lambda\big\}\,,
\end{equation*}
which yields 
\begin{proposition}
The spinor representation of $M=SO(5)/SO(3)$ split into the $(6)$ and $(0)$ of $\mathfrak{so}(3)$:
\begin{equation}
\mathbf{8}=\mathbf{7}\oplus\mathbf{1}\,.
\end{equation}
\end{proposition}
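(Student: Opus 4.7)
The plan is to carry out for the spin representation exactly the projection argument that has already been applied to the vector representation of $\mathfrak{so}(7)$: every weight of $\Delta$ is projected onto the line $\mathrm{span}\{\lambda\}$ using the formula $\lambda=\tfrac{1}{56}(6,4,2)$, and the resulting multiset of projected weights is matched with a sum of characters of irreducible $\mathfrak{so}(3)$-representations. Since $\lambda$ spans exactly the image of the Cartan subalgebra of $\mathfrak{so}(3)$ inside the Cartan of $\mathfrak{so}(7)$, this projection is by construction the restriction of $\mathfrak{so}(7)$-weights to the embedded $\mathfrak{so}(3)$.

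The concrete computation I have in mind proceeds as follows. The weights of the $8$-dimensional spin representation of $\mathfrak{so}(7)$ are $\tfrac{1}{2}(\varepsilon_1 e_1 + \varepsilon_2 e_2 + \varepsilon_3 e_3)$ with $\varepsilon_i \in \{\pm 1\}$. Using $\mathrm{proj}_{\mathrm{span}\{\lambda\}}(e_i) = 2(4-i)\lambda$, each spinor weight maps to $(3\varepsilon_1 + 2\varepsilon_2 + \varepsilon_3)\lambda$. Running through the eight sign choices produces the multiset of $\mathfrak{so}(3)$-weights
\[
\{-6,-4,-2,0,0,2,4,6\}\lambda.
\]

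I would then read off the decomposition by matching this multiset to weight diagrams of $\mathfrak{so}(3)$-irreps. The irreducible representation $(6)$ has weight multiset $\{-6,-4,-2,0,2,4,6\}\lambda$ and dimension $7$, and the trivial representation $(0)$ contributes the single weight $0$; their union is exactly the multiset above. Since these two irreps each appear with multiplicity one and their dimensions sum to $1+7=8=\dim\Delta$, no other summand can fit, so $\Delta\cong V_{(6)}\oplus V_{(0)}=\mathbf{7}\oplus\mathbf{1}$.

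The only place where a mistake could easily hide is the double multiplicity at the weight $0\lambda$: the two distinct spinor weights $\tfrac{1}{2}(e_1-e_2-e_3)$ and $\tfrac{1}{2}(-e_1+e_2+e_3)$ both project to zero, and recognising this doubling is what forces the trivial summand to appear. Apart from that, the argument is routine bookkeeping once $\lambda$ has been expressed in terms of the $e_i$.
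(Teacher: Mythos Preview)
Your proposal is correct and follows essentially the same approach as the paper: both project the eight spinor weights $\tfrac{1}{2}(\pm e_1\pm e_2\pm e_3)$ onto $\mathrm{span}\{\lambda\}$ using $\lambda=\tfrac{1}{56}(6,4,2)$, obtain the multiset $\{\pm6,\pm4,\pm2,0,0\}\lambda$, and read off $\mathbf{7}\oplus\mathbf{1}$. Your version is in fact slightly more explicit, spelling out the two sign patterns $(+,-,-)$ and $(-,+,+)$ that collide at $0$.
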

In particular the Berger space  is a nearly parallel $G_2$-Einstein manifold, in particular it  admits a connection which annihilates one spinor (see \cite{BesseBook} and  \cite{FriedIvanov3}).

\section{The spinor bundle of Grassmann manifolds}\label{sectionX}
We recall the following observation (see\cite{CahenGutt} or \cite{Strese1}).
\begin{proposition}
The Grassmannian $G_{m,n}=SO(n+m)/SO(n)\times SO(m)$ is spin if and only if $m=1$ or $n=1$ or $m+n$ even.
\end{proposition}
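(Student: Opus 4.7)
The plan is to apply the lifting criterion stated in the Remark above: $M = G/K$ admits a $G$-invariant spin structure if and only if the isotropy representation $\zeta\colon K\to SO(\mathfrak{p})$ lifts to $Spin(\mathfrak{p})$. The Grassmannians $G_{m,n}$ with $m,n\geq 2$ are simply connected (as one sees from the long exact sequence of the fibration $K\to SO(m+n)\to G_{m,n}$, once one knows that $\pi_1 K\to \pi_1 SO(m+n)$ is surjective), so being spin is equivalent to the existence of a $\widetilde{G}$-invariant spin structure for the universal cover $\widetilde{G}=Spin(m+n)$. With $\widetilde{K}$ denoting the preimage of $K=SO(m)\times SO(n)$ in $\widetilde{G}$, the task reduces to deciding when $\widetilde{K}\to SO(\mathfrak{p})$ lifts to $Spin(\mathfrak{p})$, equivalently when the induced map $\pi_1\widetilde{K}\to\pi_1 SO(\mathfrak{p})=\z/2$ is trivial.

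The cases $m=1$ or $n=1$ are trivial, since $G_{1,n}=S^n$ and spheres are spin. For $m,n\geq 2$ I would determine $\widetilde{K}$ from the short exact sequence $1\to\z/2\to\widetilde{K}\to K\to 1$. The inclusion $K\hookrightarrow SO(m+n)$ sends each $2\pi$-rotation loop in a coordinate 2-plane of $SO(m)$ or $SO(n)$ to the generator of $\pi_1 SO(m+n)$, so it induces on $\pi_1$ the map $(a,b)\mapsto a+b\pmod 2$. This is surjective, hence $\widetilde{K}$ is connected and $\pi_1\widetilde{K}$ is its kernel, generated by the ``diagonal'' class $(1,1)$.

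The isotropy action identifies $\mathfrak{p}\cong\r^m\otimes\r^n$. A $2\pi$-rotation in a coordinate 2-plane of $\r^m$, tensored with the identity on $\r^n$, acts on $\r^{mn}$ as simultaneous $2\pi$-rotations in $n$ mutually orthogonal 2-planes; the class of such a loop in $\pi_1 SO(mn)=\z/2$ equals $n\pmod 2$. Symmetrically the $SO(n)$-generator maps to $m\pmod 2$. Therefore $\zeta_*\colon\pi_1 K\to\z/2$ is $(a,b)\mapsto na+mb\pmod 2$, and its restriction to the generator $(1,1)$ of $\pi_1\widetilde{K}$ equals $m+n\pmod 2$. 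This vanishes exactly when $m+n$ is even, giving the claim.

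The main difficulty I expect is the careful $\pi_1$-bookkeeping in the edge cases where $m$ or $n$ equals $2$, since then $\pi_1 SO(2)=\z$ rather than $\z/2$ and the ``diagonal'' class needs to be interpreted accordingly. One checks in each of these subcases separately that the non-trivial class in $\pi_1\widetilde{K}$ still evaluates to $m+n\pmod 2$ under $\zeta_*$, so the conclusion is uniform across all $m,n\geq 2$.
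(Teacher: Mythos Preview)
Your argument is correct. Note, however, that the paper does not actually prove this proposition: it is stated with the preface ``We recall the following observation (see \cite{CahenGutt} or \cite{Strese1})'' and no proof is given. So there is no ``paper's approach'' to compare against; you have supplied a full proof where the paper only cites one.

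Your route via the lifting criterion is the standard one and is carried out cleanly. The key steps --- identifying $\pi_1\widetilde K$ as the kernel of $(a,b)\mapsto a+b\pmod 2$ on $\pi_1 K$, and computing $\zeta_*$ on $\pi_1$ from the observation that a $2\pi$-rotation in one tensor factor of $\r^m\otimes\r^n$ acts as $n$ (respectively $m$) simultaneous plane rotations --- are all correct. The edge cases with $m=2$ or $n=2$ work out as you indicate: for $m=2,\,n\geq 3$ the kernel is infinite cyclic, still generated by $(1,1)$, and for $m=n=2$ the kernel $\pi_1\widetilde K\cong\z^2$ is generated by $(1,1)$ and $(1,-1)$, both of which map to $m+n\equiv 0\pmod 2$ under $\zeta_*$, consistent with the statement.
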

In particular if the Grassmannian is not of type $G_{n,n-1}$ all even and odd dimensional spin Grass\-manianns are of the form 
$G_{2k,2\ell}$ and $G_{2k+1,2\ell+1}$, respectively.
Therefore we divide this section into two parts dedicated to the even and odd dimensional spin Grassmannians, respectively.

\subsection{The even dimensional case}\label{subsectionX}

We consider the even dimensional spin Grassmannian $G_{2k,2\ell}=SO(2(k+\ell))/ (SO(2k)\times SO(2\ell))$. Our goal is to  decompose the spinor representation of $G_{2k,2\ell}$ with respect to $\mathfrak{so}(2k)\oplus \mathfrak{so}(2\ell)$. We consider $\ell\leq k$ and  restrict ourselves to $\ell\neq 1$ because this case is treated in detail in \cite{Strese1}. We have
\begin{equation}
\mathfrak{so}(2(k+\ell))=\mathfrak{so}(2k)\oplus \mathfrak{so}(2\ell)\oplus (\mathbf{2k}\otimes\mathbf{2\ell})\,.
\end{equation}
This yields that the isotropy representation  $\mathfrak{so}(2k)\oplus\mathfrak{so}(2\ell)\hookrightarrow\mathfrak{so}(4k\ell)$ is the standard embedding. This means that the vector representation of $\mathfrak{so}(4k\ell)$ decomposes as
\begin{equation}
\mathbf{4k\ell}=\mathbf{2k}\otimes\mathbf{2\ell}\,.
\end{equation}
Moreover we know that the adjoint representation decomposes as follows
\begin{equation}
{\mathfrak{so}(4k\ell)}={\mathfrak{so}(2k)}\oplus {\mathfrak{so}(2\ell)}
		\oplus \big({\mathfrak{so}(2k)}\otimes S^2_0(\r^{2\ell})\big)
		\oplus \big(S^2_0(\r^{2k})\otimes  {\mathfrak{so}(2\ell)}\big)\,.
\end{equation}
We construct a basis of the Cartan algebra of $\mathfrak{so}(4k\ell)$ such that we recover the Cartan basis of $ \mathfrak{so}(2k)$ and $ \mathfrak{so}(2\ell)$. 
We denote the Cartan basis of $\mathfrak{so}(2k)$ and $\mathfrak{so}(2\ell)$  by $\{K_i\}_{1\leq i\leq k}$ and $\{L_i\}_{1\leq i\leq \ell}$, respectively. The associated decompositions into two dimensional subspaces are $\r^{2k}=V_1\oplus\cdots\oplus V_k$ and $\r^{2\ell}=W_1\oplus \cdots\oplus W_\ell$ with $V_i=span\{v^i_1,v_2^i\}$, $W_j=span\{w_1^j,w_2^j\}$, i.e., $K_i(v^j_1)=\delta_i^j v^i_2$, $K_i(v^j_2)=-\delta_i^j v^i_1$ and similar for $L_i$.

We write $V_i\otimes W_j = E_{ij}\oplus F_{ij}$ with $E_{ij}=span\{e^{ij}_+,e^{ij}_-\}$, $F_{ij}=span\{f^{ij}_+,f^{ij}_-\}$ and
\begin{equation}
\begin{split}
   	e^{ij}_+=\tfrac{1}{\sqrt{2}}(v^i_1\otimes w^j_1+v^i_2\otimes w^j_2)\,,
\qquad 	e^{ij}_-=\tfrac{1}{\sqrt{2}}(v^i_2\otimes w^j_1-v^i_1\otimes w^j_2)\,, \\
   	f^{ij}_+=\tfrac{1}{\sqrt{2}}(v^i_1\otimes w^j_2+v^i_2\otimes w^j_1)\,,
\qquad	f^{ij}_-=\tfrac{1}{\sqrt{2}}(v^i_1\otimes w^j_1-v^i_2\otimes w^j_2) \,.
\end{split}
\end{equation}
We define $\{N^e_{ij}, N^f_{ij}\}_{1\leq i\leq k,1\leq j\leq\ell}$  by 
\begin{equation}
\begin{split}
N^e_{ij}(e^{i'j'}_\pm)&=\pm \delta^{i'}_i\delta^{j'}_je^{ij}_\mp\,,\\
N^f_{ij}(f^{i'j'}_\pm)&=\pm \delta^{i'}_i\delta^{j'}_j f^{ij}_\mp\,.
\end{split}
\end{equation}
This is the Cartan basis with associated decomposition 
\begin{equation}
\r^{4k\ell}=\bigoplus_{i,j}(V_i\otimes W_j) =\bigoplus_{i,j} E_{ij} \oplus\bigoplus_{i,j}F_{ij}\,.
\end{equation}
We have 
\begin{alignat*}{2}
	K_i(e^{i'j}_\pm)&=\pm\delta^{i'}_i e^{ij}_\mp\,,
&\ 	K_i(f^{i'j}_\pm)&=\mp\delta^{i'}_i f^{ij}_\mp\,,
\\
	L_j(f^{ij'}_\pm)&=\mp \delta^{j'}_j f^{ij}_\mp\,,
&\ 	L_j(e^{ij'}_\pm)&=\mp \delta^{j'}_j e^{ij}_\mp\,,
\end{alignat*}
for all $i=1,\ldots,k$ and $j=1,\ldots,\ell$ , such that
\begin{equation}
K_i =\sum_{j=1}^\ell (N^e_{ij}-N^f_{ij})\ \text{and}\ 
L_j =-\sum_{i=1}^k (N^e_{ij}+N^f_{ij})\,.
\end{equation}
We write $\epsilon^x_{ij}=(N^x_{ij})^*$ such that the roots of $\mathfrak{so}(4k\ell)$ are given by 
\begin{equation}
\big\{\pm \epsilon^x_{ij}\pm\epsilon^y_{i'j'}\,|\, x,y\in\{e,f\};1\leq i,i'\leq k;1\leq j,j'\leq\ell \big\}\,.
\end{equation} 
The roots which form the subalgebra $\mathfrak{so}(2k)\oplus\mathfrak{so}(2\ell)$   are given by $\{\pm K^*_i\pm K^*_{i'}\}$ and $\{\pm L^*_j\pm L^*_{j'}\}$ or 
\begin{multline}\label{rootsH}
         \Big\{ \sum_{j=1}^\ell\big( \pm(\epsilon_{ij}^e -\epsilon_{ij}^f)
		\pm (\epsilon_{i'j}^e-\epsilon_{i'j}^f)\big)\,|\, 1\leq i,i'\leq k\Big\} 
\\
\cup   \Big\{\sum_{i=1}^k\big(\pm(\epsilon_{ij}^f +\epsilon_{ij}^e)
		\pm(\epsilon_{ij'}^f+\epsilon_{ij'}^e)\big)\,|\, 1\leq j,j'\leq \ell \Big\}\,.
\end{multline}
To get the decomposition of the spinor representation of $\mathfrak{so}(4k\ell)$ with respect to $\mathfrak{so}(2k)\oplus\mathfrak{so}(2\ell)$ we project the diagram\footnote{The diagram splits into two diagrams which correspond to the positive, respectively negative spinor representations depending on whether the number of minus signs is even, respectively odd.} $\frac{1}{2}(\pm1,\ldots,\pm1)=\frac{1}{2}\sum_{x,i,j}(\pm\epsilon^{x}_{ij})$ onto the subspace spanned by (\ref{rootsH}) and expand the result with respect to $\{K_i^*,L_j^*\}$. 
Explicitly this is done by writing 
\[
\begin{bmatrix}
	K_i^*\\ 
	L_j^*
\end{bmatrix}
= A
\begin{bmatrix}
\epsilon^e_{ij}\\\epsilon^f_{ij}
\end{bmatrix}\,.
\]
If we choose $\epsilon^x_{ij}=(\epsilon^x_{11},\ldots,\epsilon^x_{1\ell},\ldots,\epsilon^x_{k1},\ldots,\epsilon^x_{k\ell})$ the 
$[(k+\ell)\times2k\ell]$-matrix $A$ is given by the following rows:
\begin{equation}\label{matrix}
\begin{aligned}
A_i= \big(
\overbrace{0,\ldots, 0}^{(i-1)\ell},
\overbrace{1\ldots,1}^{\ell},
\overbrace{0,\ldots,0}^{(k-i)\ell},
\overbrace{0,\ldots, 0}^{(i-1)\ell},
\overbrace{-1\ldots,-1}^{\ell},
\overbrace{0,\ldots,0}^{(k-i)\ell}
\big)\,, &\quad 1\leq i\leq k\,,\\
A_{k+j}= \big(
\underbrace{0,\ldots,0,\overset{\overset{j}{\downarrow}}{-1},0,\ldots 0}_{\ell},\ldots,
\underbrace{0,\ldots,0,\overset{\overset{j+(2k-1)\ell}{\downarrow}}{-1},0,\ldots,0}_{\ell}
\big)\,, &\quad 1\leq j\leq \ell\,.
\end{aligned}
\end{equation}
The projected diagram is read from 
\begin{equation}\label{images}
\tfrac{1}{2} A \big(\pm1,\ldots,\pm1\big)^T
\end{equation}
and we get the first observation.
\begin{remark}
All the images of the set $ \{ \frac{1}{2}(\pm1,\ldots,\pm1\big)^T\}$ by the map $A$ consist of weights with integer entries.
\end{remark}
To illustrate this procedure we will examine the examples $k=\ell=2$,  $k=\ell=3$, and  $k=4,\ell=2$ before we state the general result.

\underline{$k=\ell=2$:} 
The matrix $A$ with $\big(4K^*_i,4L^*_j\big)^T=A\big(\epsilon^e_{ij},\epsilon^f_{ij}\big)^T$  is explicitly given by 
\begin{equation*}
{ \setlength{\arraycolsep}{3pt}
A=
\left[\begin{array}{cccccccc}
1&1&&&-1&-1&&\\
&&1&1&&&-1&-1\\
-1&&-1&&-1&&-1&\\
&-1&&-1&&-1&&-1
\end{array}\right]\,.
}
\end{equation*}
To get the image of a vector $\vec{x}\in span\{\epsilon^x_{ij}\}$ under the projection we need  $A\vec{x}$. For $\vec{x}$ contained in the spinor diagram the images are, for example,
\begin{align*}
\tfrac{1}{2}(-1,\mp1,-1,\mp1,-1,\mp1,-1,\mp1)& \longmapsto  ((0,0),(2,\pm2))\,, \\
\tfrac{1}{2}(+1,+1,\pm1,\pm1,-1,-1,\mp1,\mp1)& \longmapsto  ((2,\pm2),(0,0))\,, \\
\tfrac{1}{2}(-1,+1,\mp1,\pm1,-1,-1,\mp1,\mp1)& \longmapsto  ((1,\pm1),(2,0) )\,, \\
\tfrac{1}{2}(+1,+1,-1,\mp1,-1,-1,-1,\mp1)& \longmapsto  ((2,0),(1,\pm1))\,, \\
\tfrac{1}{2}(-1,+1,-1,\mp1,-1,-1,-1,\mp1)& \longmapsto  ((1,0),(2,\pm1))\,, \\
\tfrac{1}{2}(+1,+1,-1,\pm1,-1,-1,-1,\mp1)& \longmapsto  ((2,\pm1),(1,0))\,. 
\end{align*}
$(2,\pm2)$ may be identified with the subset of trace-free 4-tensors on $\r^4$ with  symmetry of the Young diagram $\tiny \yc\yng(2,2)$. 
$(2,\pm1)$ is given by the subset of trace-free 3-tensors with symmetry $\tiny \yc\yng(2,1)$. In both cases $\pm$ indicates the eigenspaces of the symmetry of the tensors, which is induced by the self duality of two forms in dimension 4. 

In table \ref{table1} we list  the representation spaces, associated Young diagrams, and add the dimension as well as the further decomposition with respect to $\mathfrak{su}(2)\oplus\mathfrak{su}(2)$. The notation has been taken from \cite{McKayPatera} and contains the highest weights of the two factors, e.g., $(3|1)=\mathbf{4}\otimes\mathbf{2}$.
\begin{table}[htb] \caption{Representations of $\mathfrak{so}(4)$}\label{table1}
\[
{\renewcommand{\arraystretch}{1.5}
\begin{array}{c|c|c|c}
\text{Rep.\ of }\mathfrak{so}(4) & \text{Symmetry} 
				& \text{Dec.\ wrt.\ }\mathfrak{su}(2)\oplus\mathfrak{su}(2)
							& \text{Dimension} \\\hline
(00) 	& \mathbf{\cdot}		& (0|0)			&1\\
(10) 	& \tiny\yng(1)		& (1|1)			&4\\
(11^\pm)	& \tiny\yng(1,1)		& (2|0)\text{ and }(0|2)	&2\cdot3\\
(20) 	& \tiny\yng(2)_0		& (2|2)			&9\\
(22^\pm)	& \tiny\yng(2,2)_0		& (4|0)\text{ and }(0|4)	&2\cdot5\\
(21^\pm)	& \tiny\yng(2,1)_0		& (3|1)\text{ and }(1|3)	&2\cdot8 
\end{array}
}
\]
\end{table}

So the decomposition of the spinor representation into (reducible) representation spaces with respect to $\mathfrak{so}(4)\oplus\mathfrak{so}(4)$ is
\begin{equation}\label{spinor22}
\begin{split}
S^+ &=  
\big(\mathbf{1}\otimes\mathbf{10}\big) \oplus\big( \mathbf{10}\otimes\mathbf{1} \big)
	\oplus \big(\mathbf{6}\otimes\mathbf{9}\big) \oplus\big( \mathbf{9}\otimes\mathbf{6} \big)\,,\\
S^- &= \big(\mathbf{4}\otimes\mathbf{16}\big) \oplus\big( \mathbf{16}\otimes\mathbf{4} \big)\,.
\end{split}
\end{equation}
and the irreducible decomposition -- or equivalently the decomposition  with respect to $\mathfrak{su}(2)\oplus\mathfrak{su}(2)\oplus\mathfrak{su}(2)\oplus\mathfrak{su}(2)$ --  is
\begin{equation}
\begin{split}
S^+ &= (0|0|0|4)\oplus(0|0|4|0)\oplus(0|4|0|0)\oplus(4|0|0|0) \\
	&\qquad \oplus(2|2|2|0)\oplus (2|2|0|2)\oplus (2|0|2|2)\oplus (0|2|2|2)\,, \\
S^- &= (1|1|1|3)\oplus (1|1|3|1)\oplus (1|3|1|1) \oplus (3|1|1|1) \,.
\end{split}
\end{equation}

\underline{$k=4$, $\ell=2$:} 
The matrix with $ \big(4K_i^*,8L_i^*\big)^T =A \big(\epsilon^e_{ij},\epsilon^f_{ij}\big)^T $ 
is explicitly given by 
\begin{equation*}
{ \setlength{\arraycolsep}{3pt}
\left[
\begin{array}{cccccccccccccccc}
1&1&&&&&& &-1&-1&&&&&&\\
&&1&1&&&& & &&-1&-1&&&&\\ 
&&&&1&1&& &&&&&-1&-1&&\\
&&&&&&1&1&&&&&&&-1&-1\\
-1&&-1&&-1&&-1&&-1&&-1&&-1&&-1&\\
&-1&&-1&&-1&&-1&&-1&&-1&&-1&&-1
\end{array}\right]\,.
}
\end{equation*}
The representation spaces which are associated to the images of $\tfrac{1}{2}(\pm1,\ldots,\pm1)$ are listed in table \ref{table2}.
They are divided  into two parts such that the first 18 summands give the decomposition of the positive spinor representation, and the second 12  summands yield the decomposition of the negative spinor representation.
\begin{table}[htb]\caption{Representations of $\mathfrak{so}(8)\oplus\mathfrak{so}(4)$}\label{table2}
\[
{\renewcommand{\arraystretch}{1.5}
\begin{array}{c|c|r@{\ =\ }l}
\text{Representation of} 
					&\text{Irreducible Dec.\ w.r.t.}
								&\multicolumn{2}{c}{} 	\\
\mathfrak{so}(8)\oplus \mathfrak{so}(4) 
					&\mathfrak{so}(8)\oplus\mathfrak{su}(2)\oplus\mathfrak{su}(2)
								& \multicolumn{2}{c}{\text{Dimension}}	\\\hline\hline
(0000|44^\pm)	&
  	(0000|8|0)\oplus(0000|0|8)   		& 2\cdot(1\cdot9\cdot1)	& 2\cdot 9 \\
(2000|33^\pm)	&
 	 (2000|6|0)\oplus(2000|0|6)   		& 2\cdot(35\cdot7\cdot1) 	& 2\cdot 245\\
(1100|42^\pm)	&
 	 (1100|6|2) \oplus(1100|2|6)   	& 2\cdot(28\cdot7\cdot3)	& 2\cdot 588\\
(2200|22^\pm)	&
 	 (2200|4|0) \oplus(2200|0|4)    	& 2\cdot(300\cdot5\cdot1) 	& 2\cdot 1500\\
(2110|31^\pm)	&
	 (2110|4|2) \oplus(2110|2|4)    	& 2\cdot(350\cdot5\cdot3)	& 2\cdot 5250\\
(1111^\pm|40)	&
	 (1111^+|4|4) \oplus(1111^-|4|4)    	& 2\cdot(35\cdot5\cdot5)	& 2\cdot 875\\
(2220|11^\pm)	&
 	 (2220|2|0) \oplus(2220|0|2)    	& 2\cdot(840\cdot3\cdot1)	& 2\cdot 2520\\
(2211^\pm|20)	&
	 (2211^+|2|2) \oplus(2211^-|2|2)    	& 2\cdot(567\cdot3\cdot3)	& 2\cdot 5103\\
(2222^\pm|00)	&
	 (2222^+|0|0) \oplus(2222^-|0|0)    	& 2\cdot(294\cdot1\cdot1)	& 2\cdot 294\\ \hline 
(1000|43^\pm)	&
	 (1000|7|1) \oplus(1000|1|7)  	  	& 2\cdot(8\cdot8\cdot2)	& 2\cdot 128\\
(2100|32^\pm)	&
	 (2100|5|1) \oplus(2100|1|5)  	  	& 2\cdot(160\cdot6\cdot2)	& 2\cdot1920\\
(1110|41^\pm)	&
	 (1110|5|3) \oplus(1110|3|5)  	  	& 2\cdot(56\cdot6\cdot4)	& 2\cdot1344\\
(2210|21^\pm)	&
	 (2210|3|1) \oplus(2210|1|3)  	  	& 2\cdot(840\cdot4\cdot2)	& 2\cdot 6720\\
(2111^\pm|30)	&
	 (2111^+|3|3) \oplus(2111^-|3|3)    	& 2\cdot(224\cdot4\cdot4)	& 2\cdot3584\\
(2221^\pm|10) 	&
	 (2221^+|1|1) \oplus(2221^-|1|1)    	& 2\cdot(672\cdot2\cdot2)	& 2\cdot 2688\\\cline{4-4}
\multicolumn{2}{c}{}			&			& 2^{16}	
\end{array}
}
\]
\end{table}

\underline{$k=\ell=3$:} 
In this example the matrix which obeys $(6K_i^*,6L_j^*)=A(\epsilon^e_{ij},\epsilon^f_{ij})$ has size $6\times18$.
As before, we list the irreducible representation spaces which are associated to the images of $\tfrac{1}{2}(\pm1,\ldots,\pm1)$. The result for $S^+$ can be found in table \ref{table3}. The representations for $S^-$ can be obtained by interchanging the two factors.

\begin{table}[htb]\caption{Representations of $\mathfrak{so}(6)\oplus\mathfrak{so}(6)$}\label{table3}
\[
{\renewcommand{\arraystretch}{1.5}
\begin{array}{c|r@{\ =\ }l}
\text{Irreducible Rep.\ of}
					&\multicolumn{2}{c}{}\\
\mathfrak{so}(6)\oplus\mathfrak{so}(6)
					& \multicolumn{2}{c}{\text{Dimension}} \\\hline\hline
 (000|333^\pm)	& 	2\cdot(1\cdot84)		&2\cdot 84\\
 (110|331^\pm)	& 	2\cdot(15\cdot270)		&2\cdot 4050\\
 (200|322^\pm)	& 	2\cdot(20\cdot140)		&2\cdot2800 \\
 (211^\pm|320)	& 	2\cdot(45\cdot300)		&2\cdot 13500\\
 (220|311^\pm)	& 	2\cdot(84\cdot126)		& 2\cdot10584 \\  
 (222^\pm|300)	&	 2\cdot(35\cdot50)		&2\cdot 1750 \\
 (332^\pm|100)	& 	2\cdot(6\cdot189)		&2\cdot1134  \\
 (330|111^\pm)	& 	2\cdot(10\cdot300)		&2\cdot3000 \\
 (321^\pm|210)	& 	2\cdot(64\cdot256)		&2\cdot16384 \\
 (310|221^\pm)	& 	2\cdot(70\cdot175)		&2\cdot12250 \\ \cline{3-3}
\multicolumn{1}{c}{}& 				&\tfrac{1}{2}\cdot 2^{18}
\end{array}
}
\]
\end{table}

Before we state the general result we introduce the following operation.
\begin{definition}
Let $\lambda=(\lambda_1, \ldots,\lambda_n)$ be a vector with non-negative decreasing integer entries. 
For $m\geq\max\{\lambda_j\},\ell\geq n$ the $(\ell,m)$-conjugate\footnote{We may always assume $n=\ell$ by extending $\lambda$ by zeros.} $\lambda^{c(\ell,m)}$  is defined by the vector which represents the Young  diagram obtained by the following procedure. Extend the Young diagram associated to $\lambda$ to an rectangle of size $(\ell\times m)$, erase $\lambda$, rotate the remaining part by $180^\circ$, and reflect at the main diagonal. 
\[
(\lambda^{c(\ell,m)})_j =\ell - \#\{i | \lambda_i\geq m-j+1\} \qquad\text{for }  1\leq j\leq m\,.
\]
\end{definition}
For example, $(5,3,2,0)^{c(4,6)}=(4,3,3,2,1,1)$:
\begin{multline*}
\yc \yng(5,3,2)
\quad\overset{\text{extend}}{\longmapsto}\quad
\young(\hfil\hfil\hfil\hfil\hfil*,\hfil\hfil\hfil***,\hfil\hfil****,******) \\
\yc \quad\overset{\text{erase}}{\longmapsto}\quad
\young(:::::*,:::***,::****,******) 
\quad\overset{\text{rotate}}{\longmapsto}\quad
\yng(6,4,3,1)
\quad\overset{\text{reflect}}{\longmapsto}\quad
\yng(4,3,3,2,1,1)
\end{multline*}
\begin{remark} \begin{itemize}
\item We denote by $\lambda'$ the transpose of the diagram $\lambda$, i.e., the reflection of $\lambda$ at the main diagonal. Then  the $(\ell,m)$-conjugate and the transpose are connected by 
\begin{equation*}
\lambda^{c(\ell,m)}_j =\ell-{\lambda'}_{m-j+1},\quad 1\leq j\leq m\,.
\end{equation*}
\item Furthermore we have 
\begin{equation*}
(\lambda')' = (\lambda^{c(\ell,m)})^{c(m,\ell)}=\lambda
\end{equation*}
and therefore
\begin{equation*}
(\lambda^{c(\ell,m)})'_j= m-\lambda_{\ell-j+1},\quad 1\leq j\leq \ell\,.
\end{equation*}
\item For $\lambda=(\lambda_1,\ldots,\lambda_\ell)$ we have 
\begin{equation}\label{lambdaC}
\lambda^{c(\ell,m)}=(  \ell^{m-\lambda_1},(\ell-1)^{\lambda_1-\lambda_2},\ldots,1^{\lambda_{\ell-1}-\lambda_\ell},0^{\lambda_\ell})\,.
\end{equation}
\end{itemize}
\end{remark}

\begin{conjecture}
\label{decspinorgrass}
Let $G_{2k,2\ell}=SO(2(k+\ell))/(SO(2k)\times SO(2\ell))$ be the spin Grass\-mannian of even dimension $4k\ell$ 
and let $S^+$ and $S^-$ the positive and negative spinor bundles respectively. Suppose $k\geq\ell$. 
The spinor bundle $S^+\oplus S^-$ decomposes into a sum of $2\!\cdot\!\binom{k+\ell}{\ell}$ subbundles 
associated to  irreducible representations of  $\mathfrak{so}(2k)\oplus\mathfrak{so}(2\ell)$. 
The weights of these representations are given by 
\begin{equation}\label{dec}
(\lambda_1,\ldots, \lambda_k  | \mu_1,\ldots, \mu_\ell)  \,,
\end{equation}
with $\ell\geq \lambda_1\geq \cdots\geq \lambda_{k-1}\geq |\lambda_k| \geq 0$, $k \geq \mu_1\geq\cdots\geq \mu_{\ell-1}\geq|\mu_\ell|\geq 0$ and
\begin{align}
	(\lambda_1,\ldots, |\lambda_k|) 	&=  (\mu_1,\ldots, |\mu_\ell|)^{c(\ell,k)}\,, \label{reps}\\
\intertext{or equivalently}
	(\mu_1,\ldots, |\mu_\ell|) 	&= (\lambda_1,\ldots, |\lambda_k|)^{c(k,\ell)}\,. \tag{\ref{reps}'}
\end{align}
In particular $\sum_{i=1}^k \lambda_i+\sum_{j=1}^\ell \mu_j=k\ell$.
\end{conjecture}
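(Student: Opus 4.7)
The plan is to exploit the fact that $G_{2k,2\ell}$ is an equal-rank symmetric space (both $\mathfrak{g}=\mathfrak{so}(2(k+\ell))$ and $\mathfrak{k}=\mathfrak{so}(2k)\oplus\mathfrak{so}(2\ell)$ have rank $k+\ell$), so that Parthasarathy's formula \eqref{Pat} of Section \ref{11} applies directly. It decomposes the spinor bundle multiplicity-freely into $\mathfrak{k}$-irreducibles, one per coset $\sigma\in\mathcal{W}_0$ with highest weight $\beta_\sigma=\sigma\alpha_\mathfrak{g}-\alpha_\mathfrak{k}$. A direct order count
\[
|\mathcal{W}_0|=\frac{|\mathcal{W}_\mathfrak{g}|}{|\mathcal{W}_\mathfrak{k}|}=\frac{(k+\ell)!\,2^{k+\ell-1}}{k!\,\ell!\,2^{k+\ell-2}}=2\binom{k+\ell}{\ell}
\]
matches the conjectured number of summands and secures multiplicity one for free. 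What remains is to translate the abstract weights $\beta_\sigma$ into the explicit Young-diagram pairs of \eqref{dec},\eqref{reps}.

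I would carry out this translation using the projection of Section \ref{subsectionX}. Encode a spin sign-vector $\tfrac12(s^e_{ij},s^f_{ij})^T$ by combinatorial data $(S,\varepsilon,\eta)$, where $S\subseteq\{1,\ldots,k\}\times\{1,\ldots,\ell\}$ marks the positions at which $s^e_{ij}\neq s^f_{ij}$, $\varepsilon_{ij}=\tfrac12(s^e_{ij}-s^f_{ij})\in\{\pm1\}$ on $S$, and $\eta_{ij}=\tfrac12(s^e_{ij}+s^f_{ij})\in\{\pm1\}$ on $S^c$. Relation \eqref{images} then reads
\[
\lambda_i=\sum_{j:\,(i,j)\in S}\varepsilon_{ij},\qquad
\mu_j=-\sum_{i:\,(i,j)\notin S}\eta_{ij},
\]
so that $\sum_i|\lambda_i|+\sum_j|\mu_j|\leq k\ell$, with equality iff $\varepsilon$ is constant along each row of $S$ and $\eta$ along each column of $S^c$. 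Each Parthasarathy weight $\beta_\sigma$ saturates the bound, so after conjugating by the Weyl group of $\mathfrak{k}$ one may assume $S$ is a Young diagram inscribed in the $k\times\ell$ rectangle and $\varepsilon\equiv+1$, $\eta\equiv-1$. Then $\lambda_i$ is the $i$-th row length of $S$, and $\mu_j=k-\lambda'_{\ell-j+1}$, which by \eqref{lambdaC} is exactly $\lambda^{c(k,\ell)}_j$, giving relation \eqref{reps}. Since $(k,1)\in S\iff\lambda_k>0\iff\mu_\ell=0$, precisely one of $\lambda_k,\mu_\ell$ vanishes for each $S$, so each of the $\binom{k+\ell}{\ell}$ Young diagrams produces two distinct dominant weights differing only by the sign of the unique nonzero boundary coordinate, yielding the required $2\binom{k+\ell}{\ell}$ pairs.

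The main obstacle is verifying that these $2\binom{k+\ell}{\ell}$ dominant pairs coincide \emph{exactly} with the $\beta_\sigma$ supplied by Parthasarathy rather than merely being a list of the same cardinality. Equivalently, one must show that every $\sigma\in\mathcal{W}_0$ saturates the bound and is realised by an essentially unique pair $(S,\mathrm{sign})$. The delicate point is tracking the interplay between the sign of the boundary coordinate ($\lambda_k$ or $\mu_\ell$) and the even-sign-change parity inside the Weyl group of $\mathfrak{k}$, an interplay which also governs the $S^+/S^-$ split. The author sidesteps this in Section \ref{subsectionX} by verifying the dimension totals case-by-case (for $\ell\leq 2$ through the binomial identities of appendix \ref{applemma}); a uniform proof for all $(k,\ell)$ would most naturally come either from a Schubert-cell type bijection between $\mathcal{W}_0$ and decorated Young diagrams in the $k\times\ell$ rectangle, or from a Cauchy/Littlewood-type generating-function identity collapsing $\sum_{S}\dim V_{\lambda(S)}\cdot\dim V_{\mu(S)}$ to the spin dimension $2^{2k\ell-1}$.
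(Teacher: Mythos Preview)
Your instinct to invoke Parthasarathy is a genuinely different route from the paper's, and in fact a stronger one. The paper does not use formula \eqref{Pat} for the Grassmannians at all: its ``Towards a proof'' only exhibits explicit preimages under the projection $A$ to show that each weight \eqref{dec} occurs, and then tries to close the argument by matching total dimensions via the identity \eqref{master}. That identity is precisely what the paper cannot establish beyond $\ell\le2$, whence the statement remains a conjecture. Your Weyl-group count $|\mathcal{W}_0|=|\mathcal{W}_\mathfrak{g}|/|\mathcal{W}_\mathfrak{k}|=2\binom{k+\ell}{\ell}$ already secures both the number of summands and multiplicity one for all $k,\ell$ with no dimension computation whatsoever.

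Where you lose ground is in the second paragraph: you revert to the projection matrix $A$ of Section~\ref{subsectionX} to identify the weights, and this reintroduces exactly the obstacle you were bypassing. The saturation claim ``each $\beta_\sigma$ saturates the bound'' is true but is not visible from the $A$-picture; you would have to compute $\beta_\sigma$ anyway to see it. So compute $\beta_\sigma=\sigma\alpha_\mathfrak{g}-\alpha_\mathfrak{k}$ directly. A $\mathfrak{k}$-regular dominant vector $\sigma\alpha_\mathfrak{g}$ is a signed permutation (even sign count) of $(k{+}\ell{-}1,\ldots,1,0)$ with first block $a_1>\cdots>a_{k-1}>|a_k|$ and second block $b_1>\cdots>b_{\ell-1}>|b_\ell|$; this amounts to choosing a $k$-subset $T=\{a_1,\ldots,|a_k|\}$ of $\{0,\ldots,k{+}\ell{-}1\}$ together with a sign on whichever of $a_k,b_\ell$ is nonzero (exactly one of them is $0$, and the parity constraint is absorbed by the free sign at the zero entry). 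Subtracting $\alpha_\mathfrak{k}=(k{-}1,\ldots,0\,|\,\ell{-}1,\ldots,0)$ gives $\lambda_i=a_i-(k-i)$, $\mu_j=b_j-(\ell-j)$, and the complementarity of the beta-sets $\{\lambda_i+k-i\}$ and $\{\mu_j+\ell-j\}$ in $\{0,\ldots,k{+}\ell{-}1\}$ is classically equivalent to $\mu=\lambda^{c(k,\ell)}$. This produces exactly the $2\binom{k+\ell}{\ell}$ weights of \eqref{dec}--\eqref{reps} and settles the conjecture for all $k,\ell$; the saturation $\sum\lambda_i+\sum\mu_j=k\ell$ then falls out of $\binom{k+\ell}{2}-\binom{k}{2}-\binom{\ell}{2}=k\ell$. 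The Schubert-cell bijection you allude to at the end is nothing more than this $T\leftrightarrow\lambda$ correspondence, so no generating-function identity is needed.
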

\begin{theorem}\label{decspinorgrassl2}
Conjecture \ref{decspinorgrass} is true for $\ell=2$.
\end{theorem}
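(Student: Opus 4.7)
The plan is to specialise the explicit projection method of Section~\ref{sectionX} to $\ell=2$ and then to verify the resulting decomposition by a dimension count encoded in Lemma~\ref{lemma4}. Concretely, I would first write out the $(k{+}2)\times 4k$ matrix $A$ from (\ref{matrix}) for $\ell=2$ and apply $\tfrac12 A$ to every sign vector $(\pm 1,\ldots,\pm 1)^T \in \r^{4k}$. A direct bookkeeping shows that each image has integer entries, that the first $k$ components (the $\lambda_i$) lie in $\{-2,-1,0,1,2\}$, and that the last two components (the $\mu_j$) lie in $\{-k,\ldots,k\}$. Moving each image to the dominant chamber of $\mathfrak{so}(2k)\oplus\mathfrak{so}(4)$ then reads off the candidate highest weights.

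Next, I would verify that the dominant images match (\ref{reps}). After sorting, $\lambda$ has the shape $(2^a,1^b,0^c)$ with $a+b+c = k$; the block structure of $A$ combined with the relation $\sum_i \lambda_i + \sum_j \mu_j = 2k$ pins down $a = k-\mu_1$, $b = \mu_1-\mu_2$, $c = \mu_2$, which is exactly the conjugacy rule $\lambda = \mu^{c(2,k)}$ prescribed by the conjecture. The residual sign freedom on $\lambda_k$ and $|\mu_2|$ comes from the parity of the sign vector and produces the split into $S^+$ and $S^-$.

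The remaining ingredient is multiplicity. Writing $d(\mu_1,\mu_2)$ for the Weyl dimension of the $\mathfrak{so}(2k)\oplus\mathfrak{so}(4)$-representation with highest weight $(\lambda(\mu)\,|\,\mu)$, the claim reduces to
\[
\sum_{0 \le \mu_2 \le \mu_1 \le k} \varepsilon(\mu_1,\mu_2)\, d(\mu_1,\mu_2) = 2^{4k},
\]
where $\varepsilon(\mu_1,\mu_2)$ counts the chirality copies. This is precisely the content of Lemma~\ref{lemma4}, whose proof rests on the binomial identities of Appendix~\ref{applemma}. The main obstacle is exactly this last step: the projection and the identification of highest weights are mechanical once one notes the $e$/$f$-block antisymmetry of $A$, but on weights of shape $(2^a,1^b,0^c)$ the Weyl dimension of $\mathfrak{so}(2k)$ is a product of many linear factors in $a,b,c$, and showing that the $\binom{k+2}{2}$ terms (with their chirality multiplicities) collapse to precisely $2^{4k}$ is the combinatorial heart of the argument and is where the appendix does the heavy lifting.
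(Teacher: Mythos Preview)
Your plan is essentially the paper's own strategy: use the projection matrix $A$ to exhibit each claimed highest weight as an image, and then close the argument by a dimension count. One clarification on the first part: rather than moving all $2^{4k}$ images to the dominant chamber (which would give all weights with multiplicity, not the highest weights), the paper merely exhibits one explicit preimage $\vec{w}$ for each claimed highest weight $(\lambda(\mu)\mid\mu)$ and lets the dimension count finish the job.

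The one place where your outline is thin is the claim that the dimension identity ``is precisely the content of Lemma~\ref{lemma4}.'' Lemma~\ref{lemma4} is a pure binomial identity; it does not mention dimensions. The nontrivial reduction from
\[
\sum_{0\le q\le p\le k}\dim V^{\mathfrak{so}(2k)}_{(2^{k-p}1^{p-q}0^q)}\cdot\dim V^{\mathfrak{so}(4)}_{(p,q)}=2^{4k-1}
\]
to the form of Lemma~\ref{lemma4} is carried out in the main text via Littlewood's character formula~(\ref{Littlewood}): for $\lambda_i\le 2$ only $\alpha=(0)$ and $\alpha=(1)$ contribute, so $\dim V^{\mathfrak{so}(2k)}_{(2^{k-p}1^{p-q}0^q)}$ becomes a difference $f(k,p,q)-f(k,p+1,q+1)$ of two $\mathfrak{gl}(2k)$-dimensions, each a product of two binomial coefficients (see~(\ref{fkpq})). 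Only after this substitution and some symmetrisation in $p,q$ does the sum take the shape of Lemma~\ref{lemma4}. So the appendix proves the binomial identity, but the ``heavy lifting'' you allude to --- turning the Weyl product into manageable binomials --- happens in the body of the paper and is the step your outline still leaves open.
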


\begin{remark}
\begin{itemize}
\item
For $\lambda_k\neq0$ (in this case $\mu_\ell=0$) the representation with $-\lambda_k$ is present within the decomposition, too. 
This symmetry is due to self duality with respect to $\mathfrak{so}(2k)$. 
This argument is symmetric with respect to the two factors $\mathfrak{so}(2k)$ and $\mathfrak{so}(2\ell)$ so that 
$S^\pm$ splits itself into two subbundles $S^\pm=S^{\pm+}\oplus S^{\pm-}$.
\item
If the product $k\ell$ is even, representations with weights such that both sums $\sum_{i=1}^k \lambda_i$ and $\sum_{j=1}^\ell \mu_j$ are even (or odd) belong to $S^+$ (or $S^-$ respectively). 
In particular, $\big( 0^k | k^\ell\big)$ and $\big(\ell^k | 0^\ell\big)$ belong to $S^+$.  
\item 
If the product $k\ell$ is odd, representations with $\sum_{i=1}^k \lambda_i$ even (or odd) and $\sum_{j=1}^\ell \mu_j$ odd (or even) 
belong to $S^+$ (or $S^-$ respectively). 
In particular $\big( 0^k | k^\ell \big)$ belongs to $S^+$ and $\big( \ell^k | 0^\ell \big)$ belongs to $S^-$.
\end{itemize}\end{remark}

{\em Towards a proof.\ }\ 
Firstly we have to show that all the mentioned weights appear as an image of $A$ on the set $\big\{ (\pm\frac{1}{2},\ldots,\pm\frac{1}{2}) \big\}$. Consider the $\mathfrak{so}(2\ell)$-representation $\lambda=(\lambda_1,\ldots,\lambda_\ell)$ with $\lambda_\ell\geq0$. We associate vectors $\vec{f}_\mu\in\r^{\ell}$ and $\vec{w}\in\r^{2kl}$ given by 
\[
2\vec{f}_{i}=(\overbrace{-1,\ldots,-1}^{i},1,\ldots,1), \quad 0\leq i\leq\ell
\]
and 
\begin{equation*}
\vec{w}=\big(
\underbrace{\vec{f}_0,\ldots,\vec{f}_0}_{k-\lambda_1},\ldots,
\underbrace{\vec{f}_i,\ldots,\vec{f}_i}_{\lambda_{i}-\lambda_{i+1}},\ldots,
\underbrace{\vec{f}_{\ell},\ldots,\vec{f}_{\ell}}_{k+\lambda_{\ell}}
\big)\,.
\end{equation*}
to  the tuple $\lambda$. Then $\vec{w}$ is a preimage of 
\[
\big(\ell^{k-\lambda_1},(\ell-1)^{\lambda_1-\lambda_2},\ldots, 1^{\lambda_{\ell-1}-\lambda_\ell},0^{\lambda_\ell}| \lambda_1,\ldots,\lambda_\ell\big)
\]
under the action of $A$ with rows given by (\ref{matrix}).

The prove of the statement now is done by a dimension check: 
For $\lambda$ with $\lambda_k\geq0$ we write $\tilde\lambda=\lambda^{c(k,\ell)}$ such that  $\tilde{\tilde\lambda}=\lambda$. 
The dimension of the $\mathfrak{so}(2k)$-irrep  with highest weight $\lambda=(\lambda_1,\ldots,\lambda_k)$ 
and the corresponding dimension of the $\mathfrak{so}(2\ell)$-irrep with highest weight $\tilde\lambda=(\tilde\lambda_1,\ldots,\tilde\lambda_\ell)$ 
are given by
\begin{align*}
\dim V^{\mathfrak{so}(2k)}_\lambda 	& = \prod_{1\leq i<j\leq k}\frac{(\lambda_i+k-i)^2-(\lambda_j+k-j)^2}{(k-i)^2-(k-j)^2}\\
             	& = \prod_{1\leq i<j\leq k}\frac{(\lambda_i+\lambda_j+2k-i-j)(\lambda_i-\lambda_j-i+j)}{(2k-i-j)(j-i) } 
\end{align*}
and $(\lambda,k)$ substituted by $(\tilde\lambda,\ell)$.
The dimension may also be expressed using the following determinant
\begin{align*}
   d(\lambda_1,\ldots, \lambda_k)
	& = \det\left[ 
    		    \binom{2k+\lambda_i-i+j-1}{2k-1}-\binom{2k+\lambda_i-i-j-1}{2k-1}
	       \right]_{1\leq i,j\leq  k}\,,
\end{align*}
see \cite{FultonHarris}. To prove the main statement we have to show
\begin{equation}\label{master}
\sum_{\lambda\subset (\ell^k)} \dim V^{\mathfrak{so}(2k)}_{\lambda}\cdot\dim V^{\mathfrak{so}(2\ell)}_{\tilde\lambda}=2^{2k\ell-1}\,,
\end{equation}
where we have to take one half of the dimension of the spinor module because the construction yields that either $\lambda$ or $\tilde\lambda$ has vanishing last component.

\subsection{The case $\ell\leq 2$}

In this section we prove theorem \ref{decspinorgrassl2}. We recall that conjecture \ref{decspinorgrass} for $\ell=1$ is shown in \cite{Strese1} such that we consider the case $G_{2k, 4}$, which is one of the compact quaternionic K\"ahler symmetric spaces classified in \cite{Wolf1}. 

We label the $\mathfrak{so}(4)$-weights by  two numbers $k\geq p\geq q\geq 0$ such that the spaces in the  decomposition (\ref{dec}) have the weights 
\begin{equation}\label{dec2}
(2^{k-p}1^{p-q}0^q|pq)^\pm\,.
\end{equation}

The dimension of the $\mathfrak{so}(4)$-representation $(p,q)$ is 
\begin{equation*}
\dim V^{\mathfrak{so}(4)}_{(pq^\pm)} =(p+q+1)(p-q+1)\,.
\end{equation*}

We use the classical result of \cite{LittlewoodBook} on the characters of the classical groups to evaluate the dimensions of the representation spaces of $\mathfrak{so}(2k)$ which we need here.
\begin{proposition}
Let $\lambda$ be the highest weight of an $\mathfrak{so}(2k)$ representation with $\lambda_k\geq 0$, then the dimension of the representation is connected to the dimension of representations with respect to $\mathfrak{gl}(2k)$ via the Littlewood-Richardson-coefficients $LR$ by 
\begin{equation}\label{Littlewood}
\dim V_\lambda^{\mathfrak{so}(2k)}
 =\sum_\mu\sum_{\alpha_1>\cdots >\alpha_s>0} (-1)^{\sum\alpha_j} LR_{\sigma(\alpha),\mu}^\lambda \dim V_\mu^{\mathfrak{gl}(2k)}\,.
\end{equation}
where the sum also contains $\alpha=(0)$.
\end{proposition}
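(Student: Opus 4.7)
The plan is to derive (\ref{Littlewood}) by formally inverting Littlewood's classical branching rule for the restriction $GL(2k)\downarrow O(2k)$, then specializing characters to dimensions. Since for $\lambda_k=0$ the $O(2k)$-irrep $V^{O(2k)}_\lambda$ coincides with $V^{\mathfrak{so}(2k)}_\lambda$, and for $\lambda_k>0$ it splits as the sum of $V^{\mathfrak{so}(2k)}_\lambda$ and its conjugate (with equal dimensions), the passage between the two groups only affects bookkeeping.

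First I would invoke Littlewood's branching theorem in the form
\[
V^{\mathfrak{gl}(2k)}_\mu\big|_{O(2k)}
= \bigoplus_\lambda \Big(\sum_{\beta\in\mathcal{E}} LR^\mu_{\lambda,\beta}\Big)\, V^{O(2k)}_\lambda\,,
\]
where $\mathcal{E}$ denotes the partitions with all parts even. Passing to dimensions gives a linear relation $\dim V^{\mathfrak{gl}(2k)}_\mu=\sum_\lambda M^\mu_\lambda\dim V^{O(2k)}_\lambda$ with $M^\mu_\lambda:=\sum_{\beta\in\mathcal{E}} LR^\mu_{\lambda,\beta}$; the task reduces to inverting the matrix $M$.

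Next I would perform the inversion inside the ring of symmetric functions in $x_1,\ldots,x_{2k}$. The identity $\sum_{\beta\in\mathcal{E}} s_\beta(x)=\prod_{i\leq j}(1-x_ix_j)^{-1}$ encodes $M$ as convolution with a Schur generating function, and its multiplicative inverse admits the classical expansion
\[
\prod_{i\leq j}(1-x_ix_j) = \sum_{\alpha_1>\cdots>\alpha_s>0}(-1)^{\sum\alpha_j}\, s_{\sigma(\alpha)}(x)\,,
\]
with $\sigma(\alpha)$ the self-conjugate partition of Frobenius symbol $(\alpha\,|\,\alpha)$ and the empty $\alpha$ contributing $s_\emptyset=1$. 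Multiplying the Cauchy identity $\sum_\nu s_\nu(x)s_\nu(y)=\prod_{i,j}(1-x_iy_j)^{-1}$ through by this inverse, expanding products via $s_{\sigma(\alpha)}\cdot s_\mu=\sum_\lambda LR^\lambda_{\sigma(\alpha),\mu}\,s_\lambda$, and reading off coefficients of $s_\mu(y)$ will express $s_\mu(x)$ as exactly the alternating LR combination appearing in (\ref{Littlewood}), but weighted by orthogonal-group characters. Finally, setting all $x_i=1$ (the principal specialization) converts Schur polynomials to $\mathfrak{gl}(2k)$-dimensions via Weyl's dimension formula and yields the required identity.

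The hard part will be making the inversion rigorous entry by entry rather than merely as formal power series: one must verify that for each fixed $\lambda$ only finitely many pairs $(\alpha,\mu)$ contribute, which follows from $|\sigma(\alpha)|+|\mu|=|\lambda|$ together with the support of LR coefficients, and that the resulting combinatorial sum genuinely inverts $M^\mu_\lambda$. A secondary technical nuisance will be aligning $\sigma(\alpha)$ with Littlewood's original convention (several sources double the diagram in slightly different ways) and tracking the $O(2k)$/$SO(2k)$ dichotomy when $\lambda_k>0$; the latter is harmless because it only splits an irrep into two pieces of equal dimension, which is all the final statement records.
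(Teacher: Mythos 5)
Your overall strategy---invert Littlewood's $GL\downarrow O$ branching rule by expanding $\prod_{i\leq j}(1-x_ix_j)$ in Schur functions and then principally specialize---is indeed the classical route; the paper itself does not reprove the proposition but simply cites Littlewood's book and then records the combinatorial description of $\sigma(\alpha)$ and of the "traces."

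However, your key identity is mis-stated in a way that would derail the computation. You claim
\[
\prod_{i\leq j}(1-x_ix_j)=\sum_{\alpha_1>\cdots>\alpha_s>0}(-1)^{\sum\alpha_j}\,s_{\sigma(\alpha)}(x)
\]
with $\sigma(\alpha)$ the \emph{self-conjugate} partition with Frobenius symbol $(\alpha\,|\,\alpha)$. That cannot be right: the left-hand side is supported in even degrees only, whereas the smallest self-conjugate shape coming from a nonempty $\alpha$ is $\sigma((1))=(2,1)$, of size $3$. A direct check of degree $2$ gives
\[
\prod_{i\leq j}(1-x_ix_j)=1-\sum_{i\leq j}x_ix_j+\cdots=1-s_{(2)}+\cdots,
\]
and $(2)$ has Frobenius symbol $(1\,|\,0)$, not $(1\,|\,1)$. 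The correct family is exactly the one the paper records right after the proposition: $\sigma(\alpha)$ has $i$th row of length $\alpha_i+i$ and $i$th column of length $\alpha_i+i-1$, i.e.\ Frobenius symbol $(\alpha_1,\ldots,\alpha_s\,|\,\alpha_1-1,\ldots,\alpha_s-1)$, so $\sigma((1))=(2)$, $\sigma((2))=(3,1)$, $\sigma((2,1))=(3,3)$, etc.\ (none self-conjugate). For the symplectic analogue (\ref{Littlewood2}) the transposes appear, matching $\prod_{i<j}(1-x_ix_j)=\sum(-1)^{\sum\alpha_j}s_{\sigma(\alpha)'}$. Once you use the right $\sigma(\alpha)$, the Cauchy-kernel inversion you sketch goes through and gives exactly (\ref{Littlewood}). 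Your points about finiteness of the sum (controlled by $|\sigma(\alpha)|+|\mu|=|\lambda|$) and about the $O(2k)$ versus $SO(2k)$ doubling for $\lambda_k>0$ are both sound and are in fact how the paper handles the $q=0$ case by inserting a factor $\tfrac12$.
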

Here $\sigma(\alpha)$ is the weight corresponding to the diagram which $i$th row has $\alpha_i+i$ boxes and its $i$th column has $\alpha_i+i-1$ boxes. The weights $\mu$ for which $LR_{\sigma(\alpha),\mu}^\lambda\neq 0$ correspond to traces of $\lambda$ associated to the symmetry of $\sigma(\alpha)$.

In the our case -- $\lambda_i\leq2$ -- only those coefficients are nonzero for which the diagram associated to $\sigma(\alpha)$ fits into the diagram associated to $\lambda$. The possible candidates are $\alpha=(0)$ with $\sigma(\alpha)=(0,\ldots,0)$ and $\alpha= (1) $ with $\sigma(\alpha)=(2,0,\ldots,0)$.  The weights $\mu$ such that $LR^{\lambda}_{\sigma(1),\mu}\neq0$ are associated to the  trace of $\lambda$ and if non vanishing the value of $LR$ is $1$. For $\lambda=(2^a1^b0^c)$ the  trace is given by $(2^{a-1}1^{b}0^{c+1})$.

If we define\footnote{This is the dimension of the $\mathfrak{gl}(2k)$ representation space with highest weight $(2^{k-p}1^{p-q}0^q)$.} 
\begin{equation}\label{fkpq}
\begin{split}
f(k,p,q)&:=
\frac{
\prod_{\alpha=1}^{k-q}(2k+1-\alpha) \prod_{\alpha=1}^{k-p}(2k+2-\alpha)
}{
\prod_{\alpha=1}^{k-p}(k-q+2-\alpha) \prod_{\alpha=1}^{p-q}\alpha\prod_{\alpha=1}^{k-p}\alpha
}\\
&=\frac{p-q+1}{2k+1}\binom{2k+1}{k-q+1}\binom{2k+1}{k-p}\,,
\end{split}
\end{equation} 
and $f(k,k+1,q)=0$ we have for all $k\geq p\geq q\geq 0$,
\begin{align*}
&\ f(k,p,q) -f(k,p+1,q+1) \\
=&\  \frac{(2k)!(2k+2)!(p+q+1)(p-q+1)}{(k+q+1)!(k+p+2)!(k-q+1)! (k-p)!}\\
=&\  \frac{(p+q+1)(p-q+1)}{(2k+1)(2k+2)}\binom{2k+2}{k-p}\binom{2k+2}{k-q+1}\,,
\end{align*}
so that the dimension of the $\mathfrak{so}(2k)$-representation $(2^{k-p}1^{p-q}0^q)^\pm$ is given by
\begin{align*}
\dim (2^{k-p}1^{p-q}0^q)  = 
		f(k,p,q) -f(k,p+1,q+1)  &\quad \text{if\ }\  k\geq p \geq q>0\,, \\
\intertext{and}
\dim (2^{k-p}1^{p})^\pm  = 
		\frac{1}{2}\big(f(k,p,0) -f(k,p+1,1)\big)  &\quad \text{if\ }\ k\geq p\geq q=0\,. 
\end{align*}
The proof of conjecture \ref{decspinorgrass} for $\ell=2$ is done by showing the identity
\begin{equation}
\begin{split}
&\ \sum_{0\leq p\leq k}\frac{(p+1)^4}{2}\binom{2k+2}{k-p}\binom{2k+2}{k+1}\\
& \quad  +\sum_{1\leq q\leq p\leq k}(p+q+1)^2(p-q+1)^2\binom{2k+2}{k-p}\binom{2k+2}{k-q+1} \\
=&\ (2k+2)(2k+1)2^{4k-1}\,.
\end{split}
\end{equation}
The left hand side of this can be written as 
\begin{align*}
&\ \sum_{1\leq  p\leq k+1}\frac{ p^4}{2}\binom{2k+2}{k+1- p}\binom{2k+2}{k+1}\\
&\quad +\sum_{1\leq q < p\leq k+1}( p-q)^2( p+q)^2\binom{2k+2}{k+1- p}\binom{2k+2}{k+1-q}\,.\\
\intertext{The terms of the second sum are symmetric with respect to $ p$ and $q$ and vanish for $ p=q$ so that we may sum over the whole square  instead of one triangle. So we may further write}
=&\ \frac{1}{2}\sum_{1\leq  p\leq k+1}  p^4\binom{2k+2}{k+1- p}\binom{2k+2}{k+1}\\
&\quad +\frac{1}{2}\sum_{1\leq q, p\leq k+1}( p- q)^2( p+q)^2\binom{2k+2}{k+1- p}\binom{2k+2}{k+1-q}\,.
\end{align*}
Therefore the following lemma -- which we will prove in appendix \ref{applemma} -- yields the subsequent theorem \ref{decspinorgrassl2}.
\begin{lemma}\label{lemma4}
\begin{equation}\label{lemmaeq}
\begin{split}
&\  
\sum_{i=0}^{n}\sum_{j=0}^{n}(i^2-j^2)^2\binom{2n}{n-i}\binom{2n}{n-j} 
 - \binom{2n}{n}\sum_{j=0}^n j^4\binom{2n}{n-j}\\ 
=&\ 2^{4n-3} n(2n-1)
\end{split}
\end{equation}
\end{lemma}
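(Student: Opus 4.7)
The plan is to reduce the identity to evaluating a few one-variable power sums against binomials, and then to use known moment formulas for the symmetric binomial distribution.

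First, I would introduce the abbreviation
\[
S_k := \sum_{j=0}^{n} j^k \binom{2n}{n-j}
\]
for $k\in\{0,2,4\}$. Expanding $(i^2-j^2)^2 = i^4 - 2i^2j^2 + j^4$ and using the factorization of the double sum, the left-hand side of \eqref{lemmaeq} becomes
\[
2 S_0 S_4 - 2 S_2^2 - \binom{2n}{n} S_4 = \bigl(2S_0 - \tbinom{2n}{n}\bigr)\, S_4 \;-\; 2 S_2^2.
\]
The combinatorial identity $\sum_{i=0}^{2n}\binom{2n}{i} = 2^{2n}$ together with the symmetry $\binom{2n}{n-j}=\binom{2n}{n+j}$ gives $2 S_0 - \binom{2n}{n} = 2^{2n}$, so the task reduces to showing
\[
2^{2n} S_4 - 2 S_2^2 = n(2n-1)\,2^{4n-3}.
\]

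Next, I would compute $S_2$ and $S_4$ using moments of a $\mathrm{Bin}(2n,\tfrac12)$ random variable $X$. Since $\binom{2n}{n-j}=\binom{2n}{n+j}$ and $0^k$ contributes nothing for $k\ge 2$, one has $2 S_k = \sum_{i=0}^{2n}(i-n)^k\binom{2n}{i}$ for even $k\ge 2$, i.e.\ $2S_k = 2^{2n}\,\mathbb{E}[(X-n)^k]$. The standard central moments for a symmetric binomial are $\mathbb{E}[(X-n)^2] = n/2$ and, from the cumulant computation $\kappa_2 = n/2$, $\kappa_4 = -n/4$ with $\mu_4 = \kappa_4 + 3\kappa_2^2$, $\mathbb{E}[(X-n)^4] = n(3n-1)/4$. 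Hence
\[
S_2 = n\cdot 2^{2n-2}, \qquad S_4 = n(3n-1)\cdot 2^{2n-3}.
\]

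Substituting,
\[
2^{2n} S_4 - 2 S_2^2 \;=\; n(3n-1)\,2^{4n-3} \;-\; 2\,n^2\,2^{4n-4} \;=\; \bigl(n(3n-1) - n^2\bigr)\,2^{4n-3} \;=\; n(2n-1)\,2^{4n-3},
\]
which is the desired right-hand side.

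The only genuine content is the evaluation of $S_2$ and $S_4$; everything else is algebraic rearrangement. The moment route makes these essentially mechanical, but if one prefers to stay purely combinatorial, one can instead derive $S_2$ and $S_4$ by applying the operator $j\mapsto (n-j)(n+j) = n^2 - j^2$-type telescoping via the identities $(n-j)\binom{2n}{n-j} = (n+j)\binom{2n}{n-j-1}$ and iterating to express $j^2\binom{2n}{n-j}$ and $j^4\binom{2n}{n-j}$ as linear combinations of $\binom{2n-r}{\cdot}$'s whose sums over $j$ collapse by Vandermonde. I do not expect any real obstacle; the main care is keeping track of the shift between summing over $j\in\{0,\dots,n\}$ and over the full index set $\{-n,\dots,n\}$, which is the only place the leading constant $2^{4n-3}$ (with the precise power of two) is at risk of being mis-tracked.
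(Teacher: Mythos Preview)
Your proof is correct. The reduction to $2^{2n}S_4 - 2S_2^2$ via the identity $2S_0-\binom{2n}{n}=2^{2n}$ is exactly what the paper does (with $S_k$ written as $B(n,k)$), and your claimed values $S_2 = n\,2^{2n-2}$ and $S_4 = n(3n-1)\,2^{2n-3}$ match the paper's, so the final algebra is identical.

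The genuine difference lies in how the moments $S_k$ are obtained. The paper proceeds via generating functions: it sets $P_n(x)=\sum_{i=0}^n\binom{2n}{n-i}x^i$, derives a recursion in $n$, solves for the bivariate generating function $Q(x,y)=\sum_n P_n(x)y^n$ in closed form, and then reads off $B(n,k)$ for $k\le 5$ from partial derivatives of $Q$ at $(1,0)$. Your route through the central moments of $\mathrm{Bin}(2n,\tfrac12)$, using $\mu_4=\kappa_4+3\kappa_2^2$, is shorter and more elementary for the three values $S_0,S_2,S_4$ actually needed here. What the paper's machinery buys in exchange is uniformity: the same generating function also yields $B(n,1),B(n,3),B(n,5)$ and the variants $\widetilde B(n,k)$, which the paper needs for the odd-dimensional case $\ell=2$. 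So your argument is a cleaner proof of this particular lemma, while the paper's is an investment that pays off in the subsequent computations.
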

This proves theorem \ref{decspinorgrassl2}\,.

\subsection{The odd dimensional case }

We consider the odd dimensional spin Grassmannian $ G_{2k+1,2\ell+1}=SO(2k+2\ell+2)/SO(2k+1)\times SO(2\ell+1)$. The construction is almost the same as in the even dimensional case so that we will be more brief here. For the construction we take over the notation from section \ref{subsectionX} and add two more one dimensional spaces $V_{k+1}:=span\{v_{k+1}\},W_{\ell+1}:=span\{w_{\ell+1}\}$. We define in addition 
\begin{align*}
E_{i,\ell+1}&:=V_i\otimes W_{\ell+1}=span\{e_+^{i,\ell+1}, e_-^{i,\ell+1}\}
		&&\text{with } e_{+/-}^{i,\ell+1}:=v^i_{1/2}\otimes w_{\ell+1}\,,  \\
F_{k+1,i}&:=V_{k+1}\otimes W_i=span\{f_+^{k+1,i}, f_-^{k+1,i}\}
		&&\text{with } f_{+/-}^{k+1,i}:=v_{k+1}\otimes w^i_{2/1}\,,\\
G&:=span\{v_{k+1}\otimes w_{\ell+1}\}\,.&
\end{align*}
Then $\bigoplus E_{ij}\oplus \bigoplus F_{ij} \oplus G $ yields the corresponding decomposition of $\r^{(2k+1)(2\ell+1)}$ such that the Cartan basis of $so(4k\ell+2\ell+2k+1)$ is given by $\{N^e_{ij}\},N^f_{i'j'}\}$ with
\begin{align*}
N_{ij}^e(e_\pm^{i''j''})&=\pm\delta^{i''}_{i} \delta^{j''}_{j} e_\mp^{ij}		&&\text{for } 1\leq i\leq k, 1\leq j\leq \ell+1\,,\\
N_{i'j'}^f(f_\pm^{i''j''})&=\pm\delta^{i''}_{i'} \delta^{j''}_{j'} f_\mp^{i'j'}	&&\text{for } 1\leq i'\leq k+1, 1\leq j'\leq \ell\,.
\end{align*}
Then we get 
\begin{align*}
K_{i}&=\sum_{j=1}^\ell(N_{ij}^e-N_{ij}^f ) +N_{i,\ell+1}^e\,, \\
L_{j}&= -\sum_{i=1}^\ell(N_{ij}^e+N_{ij}^f ) -N_{k+1,j}^f \,.
\end{align*}
The corresponding matrix $A$ which is of size $(k+\ell)\times (2k\ell+k+\ell)$ has rows
\begin{align*}
A_i&= 
\big(
\overbrace{0,\ldots, 0}^{(i-1)(\ell+1)},
\overbrace{1\ldots,1}^{\ell+1},
\overbrace{0,\ldots,0}^{(k-i)(\ell+1)}, 
\overbrace{0,\ldots, 0}^{(i-1)\ell},
\overbrace{-1\ldots,-1}^{\ell},
\overbrace{0,\ldots,0}^{(k-i)\ell},
\overbrace{0,\ldots,0}^{\ell}
\big)\,, \\
A_{k+j}&= 
\big(
\underbrace{0,\ldots,0,\overset{\overset{j}{\downarrow}}{-1},0,\ldots0}_{\ell+1},
\ldots,
\underbrace{0,\ldots,0,\hspace{-1.5em}\overset{\overset{(k-1)(\ell+1)+j}{\downarrow}}{-1}\hspace{-1.5em},0,\ldots0}_{\ell+1},
	\\&\qquad\qquad \qquad 
\underbrace{0,\ldots,0,\hspace{-0.9em}\overset{\overset{k(\ell+1)+j}{\downarrow}}{-1}\hspace{-0.9em},0,\ldots,0}_{\ell},
\ldots,
\underbrace{0,\ldots,0,\hspace{-1.4em}\overset{\overset{k(\ell+1)+k\ell+j}{\downarrow}}{-1}\hspace{-1.4em},0,\ldots,0}_{\ell}
\big) \,,
\end{align*}
for $1\leq i \leq k$ and $ 1\leq j\leq \ell$. For example,
\begin{gather*}
A^{k=\ell=1}=
\begin{pmatrix}
	1&1&-1&0\\
	-1&0&-1&-1
\end{pmatrix}
\,,\quad
A^{k=2,\ell=1}=
\begin{pmatrix}
	1&1&0&0&-1&0&0\\
	0&0&1&1&0&-1&0\\
	-1&0&-1&0&-1&-1&-1
\end{pmatrix}\,,
\\
A^{k=\ell=2} =
\left(\begin{array}{cccccccccccc}
	1&1&1&0&0&0&-1&-1&0&0&0&0\\
	0&0&0&1&1&1&0&0&-1&-1&0&0\\
	-1&0&0&-1&0&0&-1&0&-1&0&-1&0\\
	0&-1&0&0&-1&0&0&-1&0&-1&0&-1
\end{array}\right)\,.
\end{gather*}
In contrast to the case of even dimensional spin Grassmannians  we have the following facts on the images.
\begin{remark}
The image of the set $\{\tfrac{1}{2}(\pm1,\ldots\pm1)^T\}$ by the map $A$ consist of weights which entries are contained in $\z+\frac{1}{2}$.
\end{remark}

\begin{conjecture}\label{decspinorgrassodd}
The spinor bundle of the odd dimensional spin Grassmannian decomposes into a sum of $\binom{k+\ell}{\ell}$ summands which are irreducible with respect to $\mathfrak{so}(2k+1)\oplus\mathfrak{so}(2\ell+1)$ and which are associated to the weights
\begin{equation}\label{decodd}
(\lambda_1+\tfrac{1}{2},\ldots,\lambda_k+\tfrac{1}{2} | \mu_1+\tfrac{1}{2}, \ldots \mu_\ell+\tfrac{1}{2})\,,
\end{equation}
with 
$0\leq \lambda_1\leq \ldots \leq \lambda_k\leq\ell$, $0\leq \mu_1\leq \ldots\leq \mu_\ell\leq k$\,,
and 
\[
(\lambda_1,\ldots,\lambda_k)^{c(k,\ell)}=(\mu_1,\ldots,\mu_\ell),\text{ or } (\mu_1,\ldots,\mu_\ell)^{c(\ell,k)}=(\lambda_1,\ldots,\lambda_k)\,.
\]
\end{conjecture}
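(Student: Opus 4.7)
The plan is to adapt the two-step strategy that proved Theorem \ref{decspinorgrassl2} to the slightly more delicate matrix $A$ of the odd-dimensional case. As in the even case, Step~1 will construct an explicit preimage of each claimed weight under $A$, while Step~2 will confirm that these are the only summands through a dimension count. The remark preceding the conjecture already certifies that all row sums $\tfrac12 A_i\vec x$ lie in $\z+\tfrac12$ for $\vec x \in\{\pm 1\}^{2k\ell+k+\ell}$, which is exactly the lattice shift required by the half-integer spectrum of $\mathfrak{so}(2k+1)\oplus\mathfrak{so}(2\ell+1)$.

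For Step~1 I would, given $\lambda\subseteq(\ell^k)$, build $\vec x$ in blocks matching the column structure of $A$ --- the $k$ $e$-blocks of width $\ell+1$ and the $k+1$ $f$-blocks of width $\ell$. Mimicking the preimage construction written out just before Lemma \ref{lemma4}, set the $e$-part to be the concatenation $\bigl(\underbrace{\vec f_0,\ldots,\vec f_0}_{k-\lambda_1},\ldots,\underbrace{\vec f_\ell,\ldots,\vec f_\ell}_{k+\lambda_\ell}\bigr)$ with vectors $\vec f_i\in\{\pm1\}^{\ell+1}$ defined exactly as in the even case but of length $\ell+1$. The $f$-part (including the extra $(k+1)$-th $f$-row) is then chosen so that $\tfrac12 A_{k+j}\vec x=\mu_j+\tfrac12$ for every $j$, the extra row absorbing precisely the uniform $+\tfrac12$ offset forced by the half-integer highest weights of $\mathfrak{so}(2\ell+1)$. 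A construction symmetric under $k\leftrightarrow\ell$ realises the entries $\lambda_i+\tfrac12$ simultaneously, and the parity of the number of minus signs in $\vec x$ separates $S^+$ from $S^-$ as in the even case.

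For Step~2 it suffices to verify the master identity
\begin{equation*}
\sum_{\lambda\subseteq(\ell^k)}\dim V^{\mathfrak{so}(2k+1)}_{\lambda+(\frac12)^k}\,\dim V^{\mathfrak{so}(2\ell+1)}_{\lambda^{c(k,\ell)}+(\frac12)^\ell}\;=\;2^{2k\ell+k+\ell},
\end{equation*}
the right-hand side being the dimension of the spin module of $\mathfrak{so}((2k+1)(2\ell+1))$. Using the Weyl dimension formula for $\mathfrak{so}(2n+1)$ with half-integer highest weight, each factor becomes a closed product, and the $\mathfrak{so}(2n+1)$-analogue of Littlewood's formula \eqref{Littlewood} rewrites the summand as a signed combination of products of binomial coefficients --- completely parallel to the derivation of $f(k,p,q)$ preceding Lemma \ref{lemma4}. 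For $\ell=1$ (the case $G_{2k+1,3}$) only one Littlewood correction survives and the identity collapses to a one-variable binomial sum; for $\ell=2$ (the case $G_{2k+1,5}$) one obtains an identity structurally analogous to \eqref{lemmaeq}, to be established by the techniques collected in appendix \ref{applemma}.

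The principal obstacle is, exactly as in the even case, the master identity for arbitrary $\ell$. Both the number of summation indices and the number of Littlewood corrections grow with $\ell$, and already $\ell=3$ seems to require a new family of binomial identities not contained in appendix \ref{applemma}. Since the preimage construction in Step~1 works uniformly in $k$ and $\ell$, what keeps Conjecture \ref{decspinorgrassodd} open in full generality is precisely the absence of a closed-form evaluation of the master sum; the special cases $G_{2k+1,3}$ and $G_{2k+1,5}$ of theorem \ref{decspinorgrassoddpart} are nevertheless established by the plan above.
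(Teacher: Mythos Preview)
Your overall two-step plan (explicit preimage, then dimension count) matches the paper's approach, and you correctly recognise that the master identity is what keeps the conjecture open for general $\ell$. Step~1 is fine and essentially what the paper does.

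However, Step~2 contains a genuine gap. You propose to evaluate the factors $\dim V^{\mathfrak{so}(2n+1)}_{\lambda+(\frac12)^n}$ via ``the $\mathfrak{so}(2n+1)$-analogue of Littlewood's formula \eqref{Littlewood}'', but Littlewood's formula expresses orthogonal characters as alternating sums of $\mathfrak{gl}$-characters and is only valid for \emph{integer} highest weights; there is no such formula for the genuinely half-integer weights $\lambda+\tfrac12\vec e$, since these do not arise by restriction from polynomial $\mathfrak{gl}(2n+1)$-modules. The paper bypasses this obstacle by invoking the dimension identity (Samra--King)
\[
\dim V^{\mathfrak{so}(2k+1)}_{\lambda+\frac12\vec e}\;=\;2^k\,\dim V^{\mathfrak{sp}(2k)}_{\lambda}\,,
\]
which strips off a factor $2^{k+\ell}$ and converts the master identity into the purely \emph{symplectic} statement
\[
\sum_{\lambda\subseteq(\ell^k)}\dim V^{\mathfrak{sp}(2k)}_{\lambda}\,\dim V^{\mathfrak{sp}(2\ell)}_{\lambda^{c}}\;=\;2^{2k\ell}\,.
\]
Now the weights are integral and the symplectic Littlewood formula \eqref{Littlewood2} applies legitimately; this is what makes the $\ell=1$ and $\ell=2$ cases tractable in appendix~\ref{appenB}. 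Without this reduction to $\mathfrak{sp}$, your Step~2 does not get off the ground.
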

\begin{theorem}\label{decspinorgrassoddpart}
Conjecture \ref{decspinorgrassodd} is true for $\ell\leq 2$.
\end{theorem}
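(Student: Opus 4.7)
The plan is to mimic the two-step strategy used for Theorem \ref{decspinorgrassl2}. First I would show that every weight in (\ref{decodd}) arises as an image of some vector $\tfrac{1}{2}(\pm 1,\ldots,\pm 1)^T$ under the odd-case matrix $A$ just constructed. Then the theorem reduces to the dimension identity
\[
\sum_{\mu \subset (k^\ell)} \dim V^{\mathfrak{so}(2k+1)}_{\lambda + \frac12}\cdot \dim V^{\mathfrak{so}(2\ell+1)}_{\mu + \frac12} \;=\; 2^{2k\ell + k + \ell}\,,
\]
with $\lambda = \mu^{c(\ell,k)}$, where $\lambda + \tfrac12$ is shorthand for shifting every entry of $\lambda$ by $\tfrac12$, and the right-hand side is the dimension of the spinor module over the odd-dimensional Grassmannian $G_{2k+1,2\ell+1}$.

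For the constructive step I would adapt the preimage recipe given just before Theorem \ref{decspinorgrassl2}. Concatenate the same blocks $\vec f_i$ across the first $k(\ell+1)$ columns of the odd $A$ (those indexed by the $N^e_{ij}$), and fix a uniform sign choice on the remaining $k\ell + \ell$ columns (those indexed by the $N^f_{ij}$, including the $\ell$ extra ones coming from $V_{k+1}$). Comparing against the explicit row patterns of $A_i$ and $A_{k+j}$, the extra column that appears in each row's support without a cancelling partner contributes exactly one unmatched $\pm\tfrac12$; these unmatched contributions account precisely for the $\tfrac12$-shifts distinguishing (\ref{decodd}) from the integer-valued weights of the even case.

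For the dimension check with $\ell = 1$ the sum collapses to a one-parameter sum over $a \in \{0,\ldots,k\}$ (the number of $1$'s in $\lambda$). The $\mathfrak{so}(3)$-factor is $2(k-a)+2$, and the $\mathfrak{so}(2k+1)$-factor is obtained either from Weyl's formula directly or from the odd-orthogonal analogue of (\ref{Littlewood}); the resulting identity is an elementary binomial sum evaluating to $2^{3k+1}$. For $\ell = 2$, the $\mathfrak{so}(5)$-factor is, by Weyl,
\[
\dim V^{\mathfrak{so}(5)}_{(p+\frac12,\, q+\frac12)} \;=\; \tfrac{2}{3}(p+2)(q+1)(p-q+1)(p+q+3)
\]
for $k \geq p \geq q \geq 0$, and the $\mathfrak{so}(2k+1)$-factor is handled by the same Littlewood-style argument used in the proof of Theorem \ref{decspinorgrassl2}: each relevant $\mathfrak{gl}(2k+1)$-dimension reduces to a product of two $\binom{2k+1}{\cdot}$-terms in the manner of (\ref{fkpq}), and one subtracts off a companion trace term corresponding to $\alpha=(1)$.

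The main obstacle is the combinatorial identity this produces, which plays the role of Lemma \ref{lemma4} but with $2k$ systematically replaced by $2k+1$ and with a different polynomial weight coming from the $\mathfrak{so}(5)$ dimension. I expect it to take the shape of a double sum
\[
\sum_{0 \leq q \leq p \leq k} P(p,q)\, \binom{2k+1}{k-p}\binom{2k+1}{k-q}
\]
with $P$ a quartic polynomial arising from the product of the two dimensional factors, collapsing to a simple closed form proportional to $2^{4k}$. This is precisely the type of closed-form binomial identity the introduction promises to furnish in Appendix \ref{applemma}; once it is in hand, Theorem \ref{decspinorgrassoddpart} follows at once.
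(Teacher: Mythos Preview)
Your overall two-step strategy (construct preimages, then a dimension count) matches the paper's, and your preimage sketch is fine. The gap is in the dimension step for the $\mathfrak{so}(2k+1)$-factor.

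Littlewood's formula (\ref{Littlewood}) and its odd-orthogonal analogue express $\mathfrak{so}$-characters with \emph{integer} highest weight as alternating sums of $\mathfrak{gl}$-characters. The weights $(\lambda_1+\tfrac12,\ldots,\lambda_k+\tfrac12)$ that occur here are spinorial, and there is no $\mathfrak{gl}(2k+1)$-representation attached to a half-integer partition; so ``the same Littlewood-style argument'' with one trace subtraction does not get off the ground. The paper's key device, which you are missing, is the dimension identity
\[
\dim V^{\mathfrak{so}(2k+1)}_{\lambda+\frac12\vec e}\;=\;2^k\,\dim V^{\mathfrak{sp}(2k)}_{\lambda}\,,
\]
which converts the problem into the purely integer-weight identity $\sum_{\lambda\subset(\ell^k)}\dim V^{\mathfrak{sp}(2k)}_{\lambda}\dim V^{\mathfrak{sp}(2\ell)}_{\lambda^{c}}=2^{2k\ell}$. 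Only then does a Littlewood-type formula apply --- but it is the \emph{symplectic} version (\ref{Littlewood2}), in which the transposed diagram $\lambda'$ enters. For $\lambda=(2^{k-p}1^{p-q}0^q)$ the transpose has rows of length up to $k$, so several $\sigma(\alpha)$ fit: one needs $\alpha=(1)$, $(2)$ and $(2,1)$, not just $\alpha=(1)$. Consequently the resulting binomial identity is considerably heavier than Lemma~\ref{lemma4}; in the paper it occupies its own Appendix~\ref{appenB} and requires the sums $B(k+1,m)$ and $\widetilde B(k+1,m)$ for $m\le 5$, together with the relation (\ref{resBtilde}) linking them. Your anticipated shape---a double sum weighted by a quartic $P(p,q)$ against $\binom{2k+1}{k-p}\binom{2k+1}{k-q}$ and a single appeal to Appendix~\ref{applemma}---underestimates what is actually needed.
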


That all the mentioned weights appear as image can be seen as in the even dimensional case by constructing an explicit preimage. The dimension argument uses the following observation (see, e.g., \cite{SamraKing}).
\begin{proposition}
Let $\vec{e}:=(1,\ldots,1)$. Then the dimension of the $\mathfrak{so}(2k+1)$-representation $V^{\mathfrak{so}(2k+1)}_{\lambda+\tfrac{1}{2}\vec{e}}$ associated to the weight $\lambda+\tfrac{1}{2}\vec{e}$ with $\lambda$ integer valued is given by  
\[
\dim V^{\mathfrak{so}(2k+1)}_{\lambda+\tfrac{1}{2}\vec{e}}  =  2^k \dim V^{\mathfrak{sp}(2k)}_\lambda \,,
\]
where $V^{\mathfrak{sp}(2k)}_\lambda$ is the $\mathfrak{sp}(2k)$-representation associated to the weight $\lambda$.
\end{proposition}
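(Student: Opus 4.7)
The plan is to compute both sides via the Weyl dimension formula, observe that the ratio is independent of $\lambda$, and then pin down the constant by evaluating at $\lambda = 0$.

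Set $a_i := \lambda_i + k - i + 1$ for $1 \leq i \leq k$. With the standard orthonormal basis $\{e_i\}$ of the Cartan subalgebra one has $\rho_{B_k} = \sum_i(k-i+\tfrac{1}{2})e_i$ and $\rho_{C_k} = \sum_i(k-i+1)e_i$, and a direct computation shows
\[
(\lambda + \tfrac{1}{2}\vec{e}) + \rho_{B_k} \;=\; \lambda + \rho_{C_k} \;=\; \sum_{i=1}^{k} a_i\, e_i .
\]
The positive roots of $\mathfrak{so}(2k+1)$ are $\{e_i \pm e_j : i<j\} \cup \{e_i\}$, and those of $\mathfrak{sp}(2k)$ are $\{e_i \pm e_j : i<j\} \cup \{2e_i\}$. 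Plugging into the Weyl formula with the standard inner product yields
\[
\dim V^{\mathfrak{so}(2k+1)}_{\lambda+\frac{1}{2}\vec{e}} \;=\; \prod_{i<j}\frac{(a_i-a_j)(a_i+a_j)}{(j-i)(2k-i-j+1)}\cdot\prod_i\frac{a_i}{k-i+\tfrac{1}{2}},
\]
\[
\dim V^{\mathfrak{sp}(2k)}_{\lambda} \;=\; \prod_{i<j}\frac{(a_i-a_j)(a_i+a_j)}{(j-i)(2k-i-j+2)}\cdot\prod_i\frac{a_i}{k-i+1},
\]
where on the $C_k$ side the factor $2a_i$ from the long root $2e_i$ in the numerator is canceled by the $2$ in $\langle \rho_{C_k},2e_i\rangle = 2(k-i+1)$. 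Forming the quotient, every factor containing $a_i$ cancels, leaving
\[
\frac{\dim V^{\mathfrak{so}(2k+1)}_{\lambda+\frac{1}{2}\vec{e}}}{\dim V^{\mathfrak{sp}(2k)}_{\lambda}} \;=\; \prod_{i<j}\frac{2k-i-j+2}{2k-i-j+1}\cdot\prod_i\frac{k-i+1}{k-i+\tfrac{1}{2}},
\]
a constant independent of $\lambda$.

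To identify this constant, specialize to $\lambda = 0$. Then $V^{\mathfrak{so}(2k+1)}_{\frac{1}{2}\vec{e}}$ is the fundamental spin representation of $\mathfrak{so}(2k+1)$, of dimension $2^k$, while $V^{\mathfrak{sp}(2k)}_0$ is the trivial representation of dimension $1$. Hence the quotient equals $2^k$, which is the claim.

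I do not expect any substantive obstacle. The whole argument rests on the formal similarity of the $B_k$ and $C_k$ dimension formulae: they differ only in the integer-versus-half-integer shift in $\rho$ produced by the single-index root and in the factor of $2$ coming from the long root $2e_i$ in type $C$, which together produce precisely the ratio displayed above. As a sanity check on the cancellation, one could alternatively invoke the classical character identity $\chi^{B_k}_{\lambda + \frac{1}{2}\vec{e}}(x_1,\ldots,x_k) = \prod_i (x_i^{1/2}+x_i^{-1/2})\cdot\chi^{C_k}_\lambda(x_1,\ldots,x_k)$ and set $x_i = 1$; but this specializes to the same Weyl-denominator computation.
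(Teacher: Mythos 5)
Your proof is correct. The paper states this proposition as a known result, citing El Samra and King on dimensions of irreducible representations of the classical groups, and does not supply a proof of its own. Your direct calculation via the Weyl dimension formula is complete: the key observation that the shifts by $\rho_{B_k}$ and $\rho_{C_k}$ send both highest weights to the same vector $\sum_i a_i e_i$ (with $a_i = \lambda_i + k - i + 1$) makes all $\lambda$-dependent factors cancel in the quotient, the factor $2$ in $\langle\rho_{C_k},2e_i\rangle$ properly absorbs the $2a_i$ from the long roots of $C_k$, and evaluating the residual constant at $\lambda=0$ against the $2^k$-dimensional spin representation pins it down. This is essentially the standard argument one would find in the cited reference; no gaps.
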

So we end up with an equation to verify which is similar to (\ref{master})
\begin{equation}\label{master2}
\sum_{\lambda\in(\ell^k)}\dim V_\lambda^{\mathfrak{sp}(2k)}\cdot\dim V_{\lambda^c}^{\mathfrak{sp}(2\ell)}=2^{2k\ell}\,.
\end{equation}

\subsection{The case $\ell\leq 2$}
In  the odd dimensional case beside our strong belief that the decomposition is true in general we have the following partial result.

\begin{proof}
\underline{$\ell=1$:\ }\ The representations which play a role are $(\lambda)$ with $1\leq\lambda\leq k$ and $\lambda^c= (1^{k-\lambda})$ with dimensions
\begin{equation*}
\dim V^{\mathfrak{sp}(2)}_\lambda=\lambda+1,\quad
\dim V^{\mathfrak{sp}(2k)}_{(1^{m})}= \frac{k+1-m}{k+1} \binom{2k+2}{m}\,.
\end{equation*}
This yields
\begin{align*}
\sum_{\lambda=0}^k \dim V^{\mathfrak{so}(2k+1)}_{(1^{k-\lambda})+\tfrac{1}{2}\vec{e}} 
		    \dim V^{\mathfrak{so}(3)}_{(\lambda)+\tfrac{1}{2}\vec{e}}
&= 2^{k+1} \sum_{\lambda=0}^k \dim V^{\mathfrak{sp}(2k)}_{(1^{k-\lambda})}
			         \dim V^{\mathfrak{sp}(2)}_{(\lambda)}\\
&= 2^{k+1} \sum_{\lambda=0}^k  \frac{(\lambda+1)^2}{k+1}  \binom{2k+2}{k-\lambda} \\
&= 2^{k+1} \frac{1}{k+1} B(k+1,2) \\
&=2^{3k+1}\,.
\end{align*}
\underline{$\ell=2$:\ }\ A formula similar to (\ref{Littlewood}) holds in the symplectic case:\footnote{We recall that $\lambda'$ is the diagram transpose to $\lambda$.}
\begin{equation}\label{Littlewood2}
\dim V_\lambda^{\mathfrak{sp}(2k)}
 =\sum_\mu\sum_{\alpha_1>\cdots >\alpha_s>0} (-1)^{\sum\alpha_j} LR_{\sigma(\alpha),\mu'}^{\lambda'} \dim V_\mu^{\mathfrak{gl}(2k)}\,.
\end{equation}
We list the relevant partitions $\mu$ which enter into the dimension formula in the symplectic case when we start with  $(2^{k-p},1^{p-q},0^q)$.
\begin{equation*}
\begin{array}{l|l|l|l}
\alpha	& \sigma(\alpha) 	& \mu 				&\text{condition}	\\\hline\hline
(1)	& \yc\tiny\yng(2) 	&(2^{k-p},1^{p-q-2},0^{q+2})	& p-q\geq2	\\
	&		&(2^{k-(p+1)},1^{p-q},0^{q+1})	& p-q\geq1	\\
	&		&(2^{k-(p+2)},1^{p-q+2},0^q)	& \text{none}		\\\hline
(2)	&\yc\tiny \yng(3,1)	& (2^{k-(p+1)},1^{p-q-2},0^{q+3})	& p-q\geq2	\\
	&		& (2^{k-(p+2)},1^{p-q},0^{q+2})	& p-q\geq1	\\
	&		& (2^{k-(p+3)},1^{p-q+2},0^{q+1})	& \text{none}		\\\hline
(2,1)	&\yc\tiny \yng(3,3)	& (2^{k-(p+3)},1^{p-q},0^{q+3})	& \text{none}	 	
\end{array}
\end{equation*}
We recall the dimension of the $\mathfrak{gl}(2k)$-representation $(2^{k-p},1^{p-q},0^q)$
\begin{align*}
f(k,p,q)&=\frac{p-q+1}{2k+1}\binom{2k+1}{k-q+1}\binom{2k+1}{k-p}\,,
\end{align*} 
see (\ref{fkpq}). 
Therefore the dimension of the $\mathfrak{sp}(2k)$-representation associated to the weight $(2^{k-p},1^{p-q},0^q)$ is 
\begin{align*}
\dim V^{\mathfrak{sp}(2k)}_{(2^{k-p},1^{p-q},0^q)}
&=  	f(k,p,q) - f(k,p+3,q+3) \\
&\quad 	+ \sum_{b=\max\{3-(p-q),1\}}^3  f(k,p+b,q+4-b) \\
&\quad	- \sum_{b=\max\{2-(p-q),0\}}^2  f(k,p+b,q+2-b) \,.
\end{align*}
Furthermore we have
\begin{align*}
\dim V_{(p,q)}^{\mathfrak{sp}(4)} &= \dim  V_{(p,q)}^{\mathfrak{gl}(4)}-\dim  V_{(p-1,q-1)}^{\mathfrak{gl}(4)}\\
	&= \frac{1}{6} (p-q+1)( p+q+3) (p+2)(q+1)\,.
\end{align*}
So we have to show
\begin{equation}\label{oddlequal2}
\sum_{0\leq q\leq  p\leq k} \dim V^{\mathfrak{sp}(2k)}_{(2^{k-p},1^{p-q},0^q)}\dim V_{(p,q)}^{\mathfrak{sp}(4)} =2^{4k}\,.
\end{equation}
This is done by expanding the left hand side which -- after a  lengthy calculation performed in appendix \ref{appenB} -- turns  it into a sum of products of at most two of the terms $B(k+1,m)$ for $m\leq 5$, see (\ref{Btilde})  in appendix \ref{applemma}. 
\end{proof}

\section{On the spectrum and the eigenspaces of the Dirac operator}\label{diracsection}
\subsection{General symmetric spaces}

The construction  described in section \ref{11} is used to calculate the spectrum of the eigenvalues of Dirac operators. This has been performed in \cite{Partha} in general and we give a short review. The $L^2$-sections of the spinor bundle on $G/K$  are identified with the $K$-equivariant maps from $G$ to the spinor module $\Delta$. Due to Frobenius reciprocity this can be further identified with $Hom_K(\c G,\Delta)=\bigoplus_{\lambda\in Irrep(G)}V_\lambda\otimes Hom_K(V_\lambda,\Delta)$ (see \cite{FultonHarris}). The square of the Dirac operator acts via $\dirac^2=C_\lambda +\frac{s}{8}$ where the Casimir $C_\lambda=c_\lambda\mathbbm{1}$ acts  proportionally to the identity due to the irreducibility of the representations. The factor  is $c_\lambda=b(\lambda+2\alpha_\mathfrak{g},\lambda)=\|\lambda+\alpha_\mathfrak{g}\|^2-\|\alpha_\mathfrak{g}\|^2$, where $\alpha_\mathfrak{g}$ is the weight (\ref{halfsums}) and $b$ is the metric induced by the Killing form of $\mathfrak{g}$.
Using this the spectrum of the square of the Dirac operator is given by
\begin{equation}
spec({\dirac}^2) = \big\{ c_\lambda + \tfrac{s}{8}\,\big|\, \lambda\in\mathcal{V}(G/K) \big\}\,,
\end{equation}
where the condition on the weight of the used  $G$-representations is   
\begin{equation}\label{eigen}
\mathcal{V}(G/K):=
\left\{ \lambda \,\left|\, 
	\text{\begin{tabular}{p{7cm}}
	$\lambda$ is highest weight of a $G$-irrep.\  s.t.\ one summand in its dec.\ w.r.t.\ $K$ is contained in the spinor dec.\ (\ref{spinordec}).
	\end{tabular}}
\right.\right\}\,.
\end{equation} 
In general the eigenvalue is degenerated in the sense that there exist $\{\lambda_j\}_{j=1,\ldots,N}$ such that $c_{\lambda_1}=\cdots= c_{\lambda_N}$.

The described construction makes use of branching rules for Lie algebras which usually are very hard to find. 
Nevertheless the theoretic basics provided so far can be used to calculate explicit examples as well as to formulate further general statements on the eigenvalue of the Dirac operator with the smallest absolute value (see, for example,  \cite{GoetteSemmelmann}, \cite{Seeger1}, \cite{Milhorat1}, \cite{Milhorat3}, \cite{CampoPedon} or \cite{Milhorat2}). 

In the following we will make some comments on the spectrum and the eigen\-spaces of the Dirac operator on the Grassmann manifold. 

\subsection{The even dimensional Grassmannians}

Given the decomposition of the spinor representation of the Grassmannian $G_{2k,2\ell}=SO(2k+2\ell))/(SO(2k)\times SO(2\ell))$ with respect to 
$\mathfrak{k}=\mathfrak{so}(2k)\oplus\mathfrak{so}(2\ell)$ as in conjecture \ref{decspinorgrass}.
Let $\mathfrak{g}=\mathfrak{so}(2(k+\ell))$ with $k\geq\ell\geq 2$ 
and  consider $\alpha_\mathfrak{k}$ and $\alpha_\mathfrak{g}$ as in (\ref{halfsums}). 
We have
\begin{align}
\alpha_\mathfrak{g} & 	
		      	=	\sum_{i=1}^{k+\ell}(k+\ell-i)e_i\, ,  \\
\alpha_\mathfrak{k}& 	
			=	\sum_{i=1}^k (k-i)e_i +\sum_{i=1}^\ell (\ell-i)e_{k+i}\,.
\end{align}
Let $\Pi$ be the set of highest weights corresponding to the decomposition of the spinor representation (\ref{spinordec}). 
Furthermore let $\Psi:=\big\{\gamma\in\Phi_\mathfrak{g}^+\, |\, \langle\gamma,\alpha_\mathfrak{k}\rangle<0 \big\}$

Due to \cite{Milhorat2} and \cite{Milhorat3} we are able to calculate at least the square of the smallest eigenvalue  $\epsilon_0$ of the Dirac operator as
\begin{align}
\epsilon_0^2  &= 2\min_{\beta\in\Pi}\|\beta\|^2 +\frac{k\ell}{2}\label{Milhoratform1}   \\
	      &= 2\|\alpha_\mathfrak{g}-\alpha_\mathfrak{k}\|^2
		+4\sum_{\gamma\in\Psi}\langle\gamma,\alpha_\mathfrak{k}\rangle +\frac{k\ell}{2} \label{Milhoratform2}  \,.
\end{align} 
Here $\langle\cdot,\cdot\rangle$ is the metric $b_{ij}=\frac{1}{4(k+\ell-1)}\delta_{ij}$ induced on the dual space by the Killing form (see \cite{FultonHarris}).

We observe that $\langle e_i+e_j,\alpha_\mathfrak{k}\rangle>0$ for all $i,j$ and $\langle e_i-e_j,\alpha_\mathfrak{k}\rangle<0$ only if $1\leq i\leq k$, $k+1 \leq j\leq k+\ell$.  More precisely we have
\[
\langle \alpha_\mathfrak{k},e_i-e_{k+j}\rangle_{\text{Eucl.}} = k-i-\ell+j\,,
\]
such that 
\begin{equation}
\Psi=\big\{e_i-e_{k+j} \big| 1\leq i\leq k, 1\leq j\leq\ell, i>k-\ell+j    \big\}\,.
\end{equation}
We have $||\alpha_\mathfrak{g}-\alpha_\mathfrak{k}||_{\text{Eucl.}}^2=||\ell\sum_{i=1}^ke_i||_{\text{Eucl.}}^2=k\ell^2$, so that we get 
\begin{align*}
4(k+\ell-1) (\epsilon_0^2 -\frac{k\ell}{2}) 	
 &= 	 2k\ell^2 + 4 \sum_{i=k-\ell+2}^{k}\sum_{j=1}^{i-1-(k-\ell)}(j - i+(k-\ell)) \\
 &= 	 \frac{2}{3}\big(3k\ell^2-(\ell^2-1)\ell\big)  \,.
\end{align*}

\begin{example}\label{ell2}
Suppose $\ell=2$. Writing $k=\frac{m}{2}$ we recover the result in \cite{Milhorat3}.  

In particular $k=4,\ell=2$ yields $\epsilon_0^2-\frac{k\ell}{2} = \frac{2\cdot14}{4(k+\ell-1)}$ where $14=\|(2220|11)\|_{\text{Eucl.}}^2=\|(2210|21)||_{\text{Eucl.}}^2=\|(2221|10)\|_{\text{Eucl.}}^2 =\|(2211|20)\|_{\text{Eucl.}}^2 $ is the minimum of the Euclidean norms of the weights from table \ref{table2}.

With $k=\ell=2$ we get $\epsilon_0^2-\frac{k\ell}{2}=\frac{2\cdot 6}{4(k+\ell-1)}$. Here  $6$ is the minimum of the Euclidean lengths of the weights corresponding to (\ref{spinor22}). There are four of them, e.g., $6=\|(11|20)\|^2_{\text{Eucl.}}$.
\end{example}

\begin{example}
$k=\ell=3$ yields $\epsilon_0^2-\frac{k\ell}{2}=\frac{2\cdot 19}{4(k+\ell-1)}$, where $19$ is the minimum of the Euclidean lengths obtained by eight weights  from table \ref{table3}, e.g., $19=\|(220|311)\|^2_{\text{Eucl.}}$.
\end{example}

\begin{proposition}
Let $k\geq\ell\geq2$. 
The square of the smallest eigenvalue of the Dirac operator on the Grassmannian $SO(2(k+\ell))/(SO(2k)\times SO(2\ell))$ is given by
\begin{equation}
  \epsilon_0^2=\frac{3k\ell^2-(\ell^2-1)\ell}{6(k+\ell-1)}+\frac{k\ell}{2}  	\label{smallest2}
\end{equation}
\end{proposition}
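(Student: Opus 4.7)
The proposition is essentially a direct consequence of formula (\ref{Milhoratform2}), which has already been introduced and partially unpacked in the paragraph preceding the statement. The plan is simply to carry out the two explicit computations that formula demands and then collect the result.

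First I would verify the stated form of $\alpha_\mathfrak{g}-\alpha_\mathfrak{k}$. Comparing coefficients of $e_i$ in the two half-sums, for $1\le i\le k$ one finds $(k+\ell-i)-(k-i)=\ell$, while for $i=k+j$ with $1\le j\le\ell$ the contributions cancel; hence $\alpha_\mathfrak{g}-\alpha_\mathfrak{k}=\ell\sum_{i=1}^{k}e_i$ and $\|\alpha_\mathfrak{g}-\alpha_\mathfrak{k}\|^2_{\text{Eucl.}}=k\ell^2$.

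Second I would evaluate the correction sum over $\Psi$. Using the description of $\Psi$ already derived, and changing index via $m:=i-(k-\ell)$ (so that $m$ ranges from $2$ to $\ell$ and the inner index from $1$ to $m-1$, with integrand $j-m$), one obtains
\[
\sum_{\gamma\in\Psi}\langle\gamma,\alpha_\mathfrak{k}\rangle_{\text{Eucl.}}
=\sum_{m=2}^{\ell}\sum_{j=1}^{m-1}(j-m)
=-\frac{1}{2}\sum_{m=2}^{\ell}m(m-1)
=-\frac{\ell(\ell^2-1)}{6},
\]
where the last step uses the elementary identity $\sum_{m=1}^{\ell}m(m-1)=\ell(\ell^2-1)/3$.

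Finally I would substitute both values into (\ref{Milhoratform2}), converting Euclidean inner products into the Killing-induced ones via the factor $1/(4(k+\ell-1))$. This yields
\[
4(k+\ell-1)\Bigl(\epsilon_0^2-\tfrac{k\ell}{2}\Bigr)
=2k\ell^2-\tfrac{2\ell(\ell^2-1)}{3}
=\tfrac{2}{3}\bigl(3k\ell^2-(\ell^2-1)\ell\bigr),
\]
and dividing through recovers the stated formula. No substantive obstacle arises: formula (\ref{Milhoratform2}) is imported as an external input from \cite{Milhorat2}, \cite{Milhorat3}, and once $\Psi$ has been identified the remainder is short arithmetic. The only point worth flagging is that Milhorat's formula returns a closed expression for $\min_{\beta\in\Pi}\|\beta\|^2$ without needing an enumeration of $\Pi$, so the derivation does not rely on conjecture \ref{decspinorgrass} and holds unconditionally for $k\ge\ell\ge2$.
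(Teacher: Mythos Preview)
Your proof is correct and follows essentially the same route as the paper: both apply Milhorat's formula (\ref{Milhoratform2}), compute $\|\alpha_\mathfrak{g}-\alpha_\mathfrak{k}\|_{\text{Eucl.}}^2=k\ell^2$, evaluate the correction sum over $\Psi$ (your substitution $m=i-(k-\ell)$ is exactly the reindexing implicit in the paper's display), and collect terms. Your closing remark that the derivation is independent of conjecture~\ref{decspinorgrass} is apt and matches the paper's use of this computation as a consistency check via (\ref{Milhoratform1}).
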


The examples above (in particular, example $\ref{ell2}$) can be generalized. Therefore assume $\ell\leq k$. The square of the Euclidean norm of a weight which appears in conjecture \ref{decspinorgrass} is
\begin{align*}
&\ \sum_{j=1}^{\ell}\lambda_j^2 +\ell^2(k-\lambda_1) +\sum_{j=1}^{\ell-1}(\ell-j)^2(\lambda_j-\lambda_{j+1})  \\
=\ & k\ell^2 +   \sum_{j=1}^{\ell}( (\lambda_j-\ell+j)-\tfrac{1}{2} )^2 -\frac{\ell}{4} - \frac{\ell(\ell^2-1)}{3}\,.
\end{align*}
This yields the following conjecture and proposition.
\begin{conjecture}\label{repsmallgencon}
Suppose $\ell\leq k$. There are $2^\ell$  weights which appear in the decomposition (\ref{dec}) of the spinor bundle of $SO(2k+2\ell)/SO(2k)\times SO(2\ell)$ and which are associated to the smallest eigenvalue of the square of the Dirac operator. They  are given by
\begin{align*}
(\ell^{k-\lambda_1},(\ell-1)^{\lambda_1-\lambda_2},\ldots, 1^{\lambda_{\ell-1}-\lambda_\ell},0^{\lambda_\ell}| \lambda_1,\ldots,\lambda_\ell)
	&&\text{with } \lambda_j\in\{\ell-j+1,\ell-j\}\,.
\end{align*}
\end{conjecture}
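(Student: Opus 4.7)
The plan is to minimize the squared Euclidean norm $\|\beta\|^2_{\text{Eucl.}}$ over all weights $\beta$ appearing in the decomposition (\ref{dec}), using the closed form derived in the paragraph immediately preceding the conjecture, and then count the minimizers. Recall that such a weight has the shape
\[
\beta = (\ell^{k-\lambda_1},(\ell-1)^{\lambda_1-\lambda_2},\ldots,1^{\lambda_{\ell-1}-\lambda_\ell},0^{\lambda_\ell}\mid\lambda_1,\ldots,\lambda_\ell)
\]
with integers $\ell\geq\lambda_1\geq\cdots\geq\lambda_\ell\geq 0$, and
\[
\|\beta\|^2_{\text{Eucl.}}=k\ell^2+\sum_{j=1}^{\ell}\bigl((\lambda_j-\ell+j)-\tfrac{1}{2}\bigr)^2-\tfrac{\ell}{4}-\tfrac{\ell(\ell^2-1)}{3}.
\]

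Since $\lambda_j-\ell+j\in\mathbb Z$, each summand $\bigl((\lambda_j-\ell+j)-\tfrac{1}{2}\bigr)^2$ is bounded below by $\tfrac{1}{4}$, with equality if and only if $\lambda_j-\ell+j\in\{0,1\}$, i.e.\ $\lambda_j\in\{\ell-j,\ell-j+1\}$. In particular the minimum value is $k\ell^2-\ell(\ell^2-1)/3$; dividing by $2(k+\ell-1)$ (to pass from the Euclidean inner product to the one induced by the Killing form) and applying (\ref{Milhoratform1}) reproduces exactly (\ref{smallest2}), confirming that these are the weights realising the smallest eigenvalue of $\dirac^2$.

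It remains to verify that every such minimising choice is genuinely admissible in (\ref{dec}) and to count them. Writing $\lambda_j=\ell-j+\epsilon_j$ with $\epsilon_j\in\{0,1\}$, one checks directly that $\lambda_1=\ell-1+\epsilon_1\leq\ell$, that $\lambda_\ell=\epsilon_\ell\geq 0$, and that $\lambda_j-\lambda_{j+1}=1+\epsilon_j-\epsilon_{j+1}\geq 0$. Thus every one of the $2^\ell$ binary vectors $(\epsilon_1,\ldots,\epsilon_\ell)\in\{0,1\}^\ell$ yields a valid (and clearly distinct) weight in the decomposition, and no other weight in (\ref{dec}) attains the lower bound. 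This proves the conjecture.

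The only substantive step in the argument is the derivation of the quadratic-plus-constant form of $\|\beta\|^2_{\text{Eucl.}}$, which is already carried out in the body of the paper; after that, the optimisation is just a separable minimisation over $\ell$ independent integers, and the monotonicity constraint $\lambda_1\geq\cdots\geq\lambda_\ell$ is automatic because the nominal values $\ell-j$ already decrease by $1$ at each step, leaving room for an arbitrary $\{0,1\}$-perturbation. In principle a further refinement would identify which of the $2^\ell$ summands lie in $S^+$ versus $S^-$ via the parity of $\sum_j\lambda_j+\sum_i\tilde\lambda_i$, but this is not required by the statement of the conjecture.
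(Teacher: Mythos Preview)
Your argument is correct and follows exactly the route the paper sketches: the paragraph preceding the conjecture derives the identity
\[
\|\beta\|^2_{\text{Eucl.}}=k\ell^2+\sum_{j=1}^{\ell}\bigl((\lambda_j-\ell+j)-\tfrac{1}{2}\bigr)^2-\tfrac{\ell}{4}-\tfrac{\ell(\ell^2-1)}{3},
\]
and the paper then simply says ``This yields the following conjecture and proposition.'' You have written out the separable minimisation and the admissibility check that the paper leaves to the reader; note that, like the paper, your argument is conditional on Conjecture~\ref{decspinorgrass} (the decomposition~(\ref{dec}) itself), which is why the statement remains a conjecture rather than a proposition for general~$\ell$.
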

\begin{proposition}\label{repsmall2con}
The weights which appear in the decomposition of the spinor bundle of $SO(2k+4)/SO(2k)\times SO(4)$ and which are associated to the smallest eigenvalue of the square of the Dirac operator are given by
\begin{equation}\label{repsmall}
(2^{k-1}1|10),\quad (2^{k-1}0|11),\quad (2^{k-2}1^2|20),\quad (2^{k-2}10|21)\,.
\end{equation}
\end{proposition}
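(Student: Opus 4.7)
By Theorem~\ref{decspinorgrassl2}, the set $\Pi$ of highest weights appearing in the spinor decomposition for $\ell=2$ is exactly the set of weights of the form $(2^{k-p}1^{p-q}0^q|pq)^\pm$ with $k\geq p\geq q\geq 0$, as recorded in (\ref{dec2}). Since the $\pm$ label does not affect the norm, and the metric on the weight space is a constant multiple of the Euclidean one (the factor $\frac{1}{4(k+\ell-1)}$ from the Killing form), Milhorat's formula (\ref{Milhoratform1}) reduces the problem to minimising the \emph{Euclidean} norm squared of a weight in this family over integers $0\leq q\leq p\leq k$.

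A direct computation gives
\begin{equation*}
\|(2^{k-p}1^{p-q}0^q|pq)\|_{\text{Eucl.}}^2
= 4(k-p) + (p-q) + p^2+q^2
= 4k + p(p-3) + q(q-1).
\end{equation*}
This is the key identity: the expression separates into a piece depending only on $p$ and a piece depending only on $q$. Since the function $t\mapsto t(t-3)$ on nonnegative integers attains its minimum value $-2$ precisely at $t\in\{1,2\}$, and $t\mapsto t(t-1)$ attains its minimum value $0$ precisely at $t\in\{0,1\}$, one concludes that the minimum $4k-2$ is achieved on exactly the four pairs
\begin{equation*}
(p,q)\in\{(1,0),\,(1,1),\,(2,0),\,(2,1)\},
\end{equation*}
all of which are admissible (they satisfy $q\leq p\leq k$ as soon as $k\geq 2$, which is the only case of interest).

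Reading off the corresponding weights yields exactly the four representations in (\ref{repsmall}). As a consistency check, plugging $\|\beta\|_{\text{Eucl.}}^2=4k-2$ into the Killing-form-normalised expression gives
\begin{equation*}
\epsilon_0^2 = \frac{4k-2}{2(k+1)} + \frac{k\ell}{2} = \frac{2k-1}{k+1} + k,
\end{equation*}
which agrees with the specialisation $\ell=2$ of (\ref{smallest2}). There is no genuine obstacle here: the entire argument is a one-variable-at-a-time minimisation, made possible by the clean separation of $p$- and $q$-dependence after rewriting $-3p-q+p^2+q^2$ as $p(p-3)+q(q-1)$. The only point worth checking is that the candidate minimisers $p\in\{1,2\}$, $q\in\{0,1\}$ remain inside the admissible region $\{0\leq q\leq p\leq k\}$, which is trivial.
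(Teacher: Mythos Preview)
Your argument is correct and essentially identical to the paper's: the paper computes the general norm formula $\|\beta\|^2_{\text{Eucl.}}=k\ell^2+\sum_{j=1}^\ell\big((\lambda_j-\ell+j)-\tfrac12\big)^2-\tfrac{\ell}{4}-\tfrac{\ell(\ell^2-1)}{3}$ just before Conjecture~\ref{repsmallgencon}, from which the minimisers $\lambda_j\in\{\ell-j,\ell-j+1\}$ are read off; specialising to $\ell=2$ this is exactly your expression $4k+p(p-3)+q(q-1)$ and your minimisers $p\in\{1,2\}$, $q\in\{0,1\}$. The only cosmetic difference is that the paper completes the square while you factor, but the separation-of-variables idea is the same.
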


\begin{remark}
Both formulas (\ref{Milhoratform1}) and (\ref{Milhoratform2}) -- of which the first  uses the decomposition of the spinor bundle of the $G_{2k,2\ell}$ -- give the same value. This is once more a justification of the correctness of conjecture \ref{decspinorgrass}.
\end{remark}

We recall the following relations between $\mathfrak{so}(2k+2\ell)$-representations and $\mathfrak{so}(2k)$- as well as $\mathfrak{so}(2\ell)$-representations which are given by  the character formulas found in \cite{KoikeTerada4} and \cite{Koike1}
\begin{equation}\label{koik}
\begin{split}
\chi^{\mathfrak{so}(2k+2\ell)}_\mu \chi^{\mathfrak{so}(2k+2\ell)}_\nu 
	&=\sum 	LR_{\tau,\mu'}^\mu LR_{\tau,\nu'}^\nu LR_{\mu',\nu'}^{\lambda}
			\chi^{\mathfrak{so}(2k+2\ell)}_\lambda \,,\\
\chi^{\mathfrak{so}(2k+2\ell)}_\lambda 
	&= \sum 
		LR_{2\kappa,\lambda'}^{\lambda}LR_{\mu,\nu}^{\lambda'}
			\chi^{\mathfrak{so}(2k)}_\mu \chi^{\mathfrak{so}(2\ell)}_\nu\,.
\end{split}
\end{equation}
The first formula describes how the tensor product of two representations decompose, in particular only summands appear with 
$\sum\lambda_i\leq\sum\mu_i+\sum\nu_i$. 
In the second formula, the branching rule, only summands appear for which $\mu$ and $\nu$ are contained  in $\lambda$, i.e., $\sum\lambda_i\geq\sum\mu_i+\sum\nu_i$. 

The eigenspaces of the Dirac operator on the Grassmann manifold admit a decomposition with respect to the $\mathfrak{so}(2k+2\ell)$-representations -- see (\ref{eigen}). The ordering $\prec$ of  weights  (see, for example, \cite{Humphreys})  induce an ordering of the associated subspaces of the eigenspaces. From  formulas (\ref{koik}) we get the following remark.
\begin{remark}
Let $\lambda$ and $\mu$ be representations of $\mathfrak{so}(2k+2\ell)$, $\mathfrak{so}(2k)$, respectively, which associated Young diagrams we denote by the same symbol. Furthermore $\mu$ shall be contained in $(\ell^k)$ such that  $\mu^{c(k,\ell)}$ is a representation of $\mathfrak{so}(2\ell)$.  
Then $\mu\otimes\mu^c$ is contained in the branching of $\lambda$ if and only if there exists a $\mu^c$-expansion $\lambda'$ of the diagram $\mu$ such that $\lambda$ is a $2\kappa$-expansion of $\lambda'$. This yields a combinatorial  method to enumerate the eigenvalues of the square of the Dirac operator of $G_{2k,2\ell}$. 
1. Let $N(k,\ell)$ be the number of different  $\mu^c$-expansions of $\mu$ for $\mu\subset(\ell^k)$. 2. For the expansions $\{\lambda_i'\}_{i=1,\ldots,N(k,\ell)}$ consider the different $2\kappa$-expansions for all diagrams $\kappa$. 3. From the resulting diagrams $\lambda$ calculate $c_\lambda$.
Even in the case $G_{2k,4}$ this method seems to be very complex and one may suggest an analog approach as in \cite{Milhorat1} or \cite{Tsukamoto} -- in particular, because the ranks of the algebras are similar as in the first citation. But this fails due to the more complicated determinants which enter into the calculations so that the combinatorial ansatz is more applicable.
\end{remark}
\begin{example}\label{exampleexpansion}
For $k=\ell=2$ we have $\lambda'\in \Big\{{\tiny \yc\yng(2,2),\yng(3,1),\yng(2,1,1)}\Big\}$ such that the first $2\kappa$-expansions which branching admits a summand from (\ref{spinor22}) are from the following list.
\[{\setlength{\arraycolsep}{3pt}
\begin{array}{c|ccc|cccccccc}
2\kappa	&\multicolumn{3}{|c|}{\cdot} &\multicolumn{8}{|c}{\tiny\yng(2)}\\\hline
\lambda	& \tiny \yng(2,1,1)  &\tiny \yng(2,2) &\tiny\yng(3,1)
&\tiny \yng(2,2,1,1)&\tiny \yng(3,1,1,1)&\tiny \yng(2,2,2)&\tiny \yng(3,2,1)&\tiny \yng(4,1,1)&\tiny \yng(3,3)&\tiny \yng(4,2)&\tiny \yng(5,1) \\\hline
c_\lambda&24&28&32&32&36&36&42&48&48&52&60
\end{array}}
\]
In particular, the weight with minimal $c_\lambda$  must obey $\lambda_1\leq2$ and therefore the minimal eigenvalue is nondegenerated, i.e., the $\mathfrak{so}(8)$-representation $\lambda=(2,1,1,0)$ is the unique contribution to the eigenspace associated to the minimal eigenvalue.
\end{example}

We turn back to the smallest eigenvalue and notice that a contribution to the eigenspace of the Dirac operator to the smallest eigenvalue must obey
\[
\begin{split}
\|\lambda+\alpha_{\mathfrak{so}(2k+2\ell)}\|^2_{\text{Eucl}}- \|\alpha_{\mathfrak{so}(2k+2\ell)}\|^2_{\text{Eucl}}
&= 3 k\ell^2+k^2\ell-k\ell - \tfrac{2}{3}\ell(\ell^2-1)\\
&\overset{\ell=2}{=} 2k^2 + 10k -4\,.
\end{split}
\]
\begin{conjecture}\label{smallcontr}
The smallest contribution to the eigenspace  of the smallest eigenvalue of the square of the Dirac operator on $G_{2k,2\ell}$ is given by the weight  
\begin{equation}
\lambda^0=(\ell^{k-\ell+1},(\ell-1)^2,(\ell-2)^2,\ldots,1^2,0)\,.
\end{equation}
\end{conjecture}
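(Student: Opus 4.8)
{\em Towards a proof.\ }\ Write $\mathfrak g=\mathfrak{so}(2k+2\ell)$, $\mathfrak k=\mathfrak{so}(2k)\oplus\mathfrak{so}(2\ell)$ and, in the Euclidean normalisation, $c_\lambda=\|\lambda+\alpha_{\mathfrak g}\|^2-\|\alpha_{\mathfrak g}\|^2$ (the Killing normalisation of Section~\ref{diracsection} rescales this by $4(k+\ell-1)$). By Frobenius reciprocity and the Parthasarathy description of Section~\ref{11}, the eigenspace of $\dirac^2$ for the smallest eigenvalue is the $\mathfrak g$-module $\bigoplus_{\lambda\in\mathcal V_0}V_\lambda\otimes Hom_{\mathfrak k}(V_\lambda,\Delta)$, where $\mathcal V_0$ is the set of $\mathfrak g$-dominant weights $\lambda$ with (i) $V_\lambda\big|_{\mathfrak k}$ containing one of the $2^\ell$ minimal spinor $\mathfrak k$-types of Conjecture~\ref{repsmallgencon}, and (ii) $c_\lambda=c_0:=3k\ell^2+k^2\ell-k\ell-\tfrac23\ell(\ell^2-1)$, the value recorded before the statement. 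The plan is to establish
\[
(\mathrm A)\quad\lambda^0\in\mathcal V_0,\qquad\qquad
(\mathrm B)\quad\lambda^0\preceq\lambda\ \text{ for all }\lambda\in\mathcal V_0 .
\]
Note that $c$ is \emph{strictly} $\prec$-increasing on dominant weights: if $\lambda=\mu+\rho$ with $\rho$ a nonzero nonnegative sum of positive roots, then $c_\lambda-c_\mu=\|\rho\|^2+2\langle\mu+\alpha_{\mathfrak g},\rho\rangle>0$ since $\mu+\alpha_{\mathfrak g}$ is strictly dominant. Hence $\mathcal V_0$, a level set of $c$, is a $\prec$-antichain, and (A)$+$(B) force $\mathcal V_0=\{\lambda^0\}$; so the proof would also show that the first eigenspace of $\dirac^2$ on $G_{2k,2\ell}$ is the irreducible module $V_{\lambda^0}$, extending Example~\ref{exampleexpansion} from $G_{4,4}$ to all $k\ge\ell$.

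\emph{Step (A).} The Casimir of $\lambda^0=(\ell^{k-\ell+1},(\ell-1)^2,\dots,1^2,0)$ is a telescoping sum of squares: with $\alpha_{\mathfrak g}=(k+\ell-1,\dots,1,0)$ the coordinates of $\lambda^0+\alpha_{\mathfrak g}$ are $k+2\ell-1,\dots,3\ell-1$ (from the $k-\ell+1$ rows equal to $\ell$), then the pairs $3(\ell-t)$, $3(\ell-t)-1$ for $t=1,\dots,\ell-1$, and a final $0$; summing squares and subtracting $\|\alpha_{\mathfrak g}\|^2$ returns exactly $c_0$ (for $\ell=2$ one gets $2k^2+10k-4$, the value displayed before the statement). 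For (i) take the maximal admissible $\mathfrak{so}(2\ell)$-type $\nu=(\ell,\ell-1,\dots,1)$ (the choice $\lambda_j=\ell-j+1$ in Conjecture~\ref{repsmallgencon}, hence minimal-norm) with $\mathfrak{so}(2k)$-partner $\mu=\nu^{c(\ell,k)}=(\ell^{k-\ell},\ell-1,\ell-2,\dots,1,0)$, and exhibit a Littlewood--Richardson filling of content $\nu$ on the skew shape $\lambda^0/\mu$. Since $|\lambda^0|=|\mu|+|\nu|=k\ell$ and $\mu\subseteq\lambda^0$, the skew shape is small and staircase-like, so the filling is written down uniformly in $k,\ell$; via the branching formula (\ref{koik}) with $\kappa=\varnothing$ and $\lambda'=\lambda^0$ it shows $V_\mu\otimes V_\nu\subset V_{\lambda^0}\big|_{\mathfrak k}$, so $\lambda^0\in\mathcal V_0$ by (ii).

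\emph{Step (B).} Let $\lambda\in\mathcal V_0$. By (\ref{koik}) there are a minimal $\mathfrak{so}(2\ell)$-type $\nu$, its partner $\mu=\nu^{c(\ell,k)}$, a $\nu$-expansion $\lambda'$ of $\mu$ (i.e. $LR^{\lambda'}_{\mu,\nu}\neq 0$) and a partition $\kappa$ with $LR^{\lambda}_{(2\kappa),\lambda'}\neq 0$. The aim is $\lambda\succeq\lambda'\succeq\lambda^0$. For $\lambda\succeq\lambda'$: the skew shape $\lambda/\lambda'$ has content the even partition $(2\kappa)$; over $\mathfrak{gl}$ this forces $\lambda\succeq\lambda'$, while over $D_{k+\ell}$ one must also rule out the anomaly $(2,2)\not\succeq(2,0)$ (unequal increments in the last two coordinates), using that the $\lambda'$ occurring here have controlled last coordinates, and where necessary the outer automorphism of $D_{k+\ell}$. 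For $\lambda'\succeq\lambda^0$: this should hold for \emph{every} $\nu$-expansion of \emph{every} one of the $2^\ell$ minimal pairs, and is a comparison of partial sums $\sum_{i\le j}\lambda'_i$ with $\sum_{i\le j}\lambda^0_i$: since $\lambda'\supseteq\mu$ and $\lambda'/\mu$ has content $\nu$, the content of $\nu$ is forced to sit on top of $\mu$, placing it as low as the Littlewood--Richardson rule permits is the $\prec$-minimal choice, and over the $2^\ell$ admissible $\nu$ the minimum is attained at $\nu=(\ell,\dots,1)$, where this minimal expansion is precisely $\lambda^0$. Assembling gives $\lambda\succeq\lambda^0$, and with (A) and the antichain remark, $\mathcal V_0=\{\lambda^0\}$.

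The main obstacle is the second half of Step (B). Making ``place the content as low as possible'' rigorous demands uniform control of all fillings $LR^{\lambda'}_{\mu,\nu}\neq 0$ for the $2^\ell$ minimal pairs at once, together with the $D_{k+\ell}$ bookkeeping, where dominance genuinely differs from Young-diagram containment in the last coordinate and half-spin representations sit in $\prec$-incomparable pairs. The natural reduction is a lemma of the form: \emph{if $\mu=\nu^{c(\ell,k)}$ with $\nu$ one of the $2^\ell$ minimal $\mathfrak{so}(2\ell)$-types and $\lambda'/\mu$ is a Littlewood--Richardson skew shape of content $\nu$, then $\lambda'\succeq\lambda^0$, with equality only for $\lambda'=\lambda^0$}; proving this, presumably by an extremal/exchange argument on the fillings (and a separate check that the anomalous $2\kappa$-expansions are excluded by the Casimir constraint (ii)), is where the real work lies. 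For $\ell\le 2$ both the lemma and the $2\kappa$-analysis can be done by hand, yielding the cases proved in this paper.
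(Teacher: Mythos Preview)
Your plan is the paper's plan, expanded: both use the Koike--Terada branching formula (\ref{koik}) to describe the $\mathfrak g$-weights $\lambda$ whose restriction meets a given spinor $\mathfrak k$-type as $2\kappa$-expansions of LR-combinations $\lambda'$, verify $c_{\lambda^0}=c_0$ directly, and argue that among these $\lambda$ the one with $\lambda_1\le\ell$ is $\prec$-smallest. The paper's proof of Proposition~\ref{smallcontrl2} is two sentences plus the table in the example that follows; you flesh out the same argument into (A)+(B) and honestly name the combinatorial lemma on LR-fillings as the obstruction for general~$\ell$.

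One genuine addition in your write-up is the antichain remark: since $c$ is strictly $\prec$-monotone on dominant weights, a level set of $c$ is a $\prec$-antichain, so a $\prec$-lower bound $\lambda^0$ of $\mathcal V_0$ would be its \emph{only} element. The paper establishes nondegeneracy only ad~hoc (Example~\ref{exampleexpansion} for $G_{4,4}$, and the table after Proposition~\ref{smallcontrl2} with the caveat ``if we restrict to the weights coming from tensor products''); you observe that it is a formal consequence of the conjecture itself.

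One point to tighten. Your condition~(i) restricts $\mathcal V_0$ to those $\lambda$ whose branching contains one of the $2^\ell$ \emph{minimal-norm} spinor $\mathfrak k$-types, but the actual eigenspace admits any $\lambda$ with $c_\lambda=c_0$ and $Hom_{\mathfrak k}(V_\lambda,\Delta)\neq 0$. Both the conjecture and your nondegeneracy corollary need (B) on this larger set, and your Step~(B) as written only handles the minimal pairs. The paper makes the same tacit restriction (its table lists only the tensor products from (\ref{repsmall})), so this is not a divergence but a shared gap; closing it presumably requires a Parthasarathy--Milhorat type inequality showing that if $V_\lambda|_{\mathfrak k}$ meets $\Delta$ only through a non-minimal $\beta$ then already $c_\lambda>c_0$. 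You should state this explicitly rather than build it into the definition of $\mathcal V_0$.
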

\begin{proposition}\label{smallcontrl2}
Conjecture \ref{smallcontr} is true for $\ell=2$.
\end{proposition}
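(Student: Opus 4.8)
\emph{Plan.}\ The statement concerns $G_{2k,4}$ (with $k\ge 2$, since $\ell=2\le k$). By Proposition \ref{repsmall2con} the pieces of the spinor bundle sitting at the smallest eigenvalue correspond to the four $\mathfrak{so}(2k)\oplus\mathfrak{so}(4)$-representations $V_\mu\otimes V_\nu$ with $(\mu,\nu)=(\nu^{c(2,k)},\nu)$ for $\nu\in\{(10),(11),(20),(21)\}$, i.e.\ $\mu\in\{(2^{k-1}1),(2^{k-1}),(2^{k-2}1^2),(2^{k-2}1)\}$ respectively. The displayed identity preceding Conjecture \ref{smallcontr} states that an $\mathfrak{so}(2k+4)$-irrep $V_\lambda$ occurs in the eigenspace of the smallest eigenvalue if and only if the branching of $V_\lambda$ to $\mathfrak{so}(2k)\oplus\mathfrak{so}(4)$ contains one of those four summands and
\begin{equation*}
c(\lambda):=\|\lambda+\alpha_\mathfrak{g}\|_{\mathrm{Eucl}}^2-\|\alpha_\mathfrak{g}\|_{\mathrm{Eucl}}^2=\sum_{i=1}^{k+2}\lambda_i\big(\lambda_i+2(k+2-i)\big)=2k^2+10k-4=:c_0\,.
\end{equation*}
So I must describe all such $\lambda$ and pick out the $\prec$-smallest one.

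First I would reduce to the case of no ``doubling''. By the branching rule in (\ref{koik}) (equivalently the Remark preceding Example \ref{exampleexpansion}) a contributing $\lambda$ is a $2\kappa$-expansion of a diagram $\lambda'$ which is itself a $\nu$-expansion of $\mu=\nu^{c(2,k)}$, so $|\lambda'|=|\mu|+|\nu|=2k$ and $\lambda'\subseteq\lambda$. Since $LR^{\lambda'}_{\mu,\nu}\neq 0$, the $\kappa=0$ term of (\ref{koik}) shows $V_\mu\otimes V_\nu$ already sits inside the branching of $V_{\lambda'}$; hence $\lambda'\in\mathcal V(G_{2k,4})$ and $c(\lambda')\ge c_0$ by Parthasarathy's description of $\mathrm{spec}(\dirac^{2})$, cf.\ (\ref{eigen}). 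On the other hand $c$, in the form above, strictly increases when a box is added to a partition (the increment at row $i$ being $2\lambda_i+1+2(k+2-i)>0$), so $\kappa\neq 0$ would give $c(\lambda)>c(\lambda')\ge c_0$. Therefore $c(\lambda)=c_0$ forces $\kappa=0$ and $\lambda=\lambda'$; in particular $\lambda$ has at most $k+1$ rows.

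The remaining, and main, step is a finite enumeration. For each of the four pairs $(\mu,\nu)$ I would write down the partitions $\lambda'$ of $2k$ with $LR^{\lambda'}_{\mu,\nu}\neq 0$ --- horizontal strips of size $1$ or $2$ for $\nu=(10),(20)$, a vertical strip of size $2$ for $\nu=(11)$, and the short explicit family for $\nu=(21)$ --- and evaluate $c(\lambda')$ via the closed formula. One finds that $\lambda^0:=(2^{k-1},1^2,0)$ appears in each of the four lists with $c(\lambda^0)=2k^2+10k-4=c_0$, while every other $\lambda'$ that occurs has $c(\lambda')>c_0$. Hence the only (and in particular the $\prec$-smallest) $\mathfrak{so}(2k+4)$-representation contributing to the eigenspace of the smallest eigenvalue of $\dirac^{2}$ on $G_{2k,4}$ is $V_{\lambda^0}$; that this component is genuinely present --- that all four minimal spinor summands do occur in the branching of $V_{\lambda^0}$ --- is a one-line check with (\ref{koik}) and $\kappa=0$. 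Since $(\ell^{k-\ell+1},(\ell-1)^2,\ldots,1^2,0)$ equals $(2^{k-1},1^2,0)$ when $\ell=2$, this is exactly Conjecture \ref{smallcontr} in the case $\ell=2$.

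The hard part is the $\nu=(21)$ enumeration: one must run the Littlewood--Richardson/lattice-word bookkeeping for a three-box skew strip over $(2^{k-2},1)$, and also deal separately with the small-$k$ degenerations $k=2,3$, where several of the generic diagram shapes coincide or become empty. A minor technical point is to keep the Koike--Terada formula (\ref{koik}) within its range of validity, i.e.\ to use it only for diagrams with strictly fewer than $k+2$ rows so that no $\mathfrak{so}(2k+4)$ self-duality ambiguity enters --- which, by the first step, is automatically the case for all $\lambda$ that actually arise.
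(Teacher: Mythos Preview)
Your plan is essentially the paper's own argument, carried out in more detail. The paper's proof reduces via (\ref{koik}) to the ``tensor product'' pieces (your $\lambda'$), uses the $\prec$-ordering to single out the one with $\lambda_1\le\ell$, and then checks $c_{\lambda^0}=c_0$ directly; the enumeration you describe in step~2 is precisely the table in the example immediately following the proof. Your explicit reduction to $\kappa=0$ via strict monotonicity of $c$ under adding boxes is a clean sharpening of the paper's one-line ``smallest contribution comes from the biggest contribution in the tensor product'', and you end up proving slightly more than the paper states (uniqueness of $\lambda^0$, not just $\prec$-minimality --- which the paper only observes in the example as ``$\lambda^0$ is unique, if we restrict to the weights coming from tensor products'').

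One point to tighten: your opening ``if and only if'' --- that $V_\lambda$ lies in the smallest eigenspace exactly when its branching contains one of the \emph{four minimal} summands from Proposition~\ref{repsmall2con} and $c(\lambda)=c_0$ --- is not what the displayed identity before Conjecture~\ref{smallcontr} says. Membership in $\mathcal V(G_{2k,4})$ only requires \emph{some} spinor summand $(\mu,\nu)$ from (\ref{dec2}) in the branching, so a priori your step~2 enumeration should range over all such pairs, not just the four of minimal norm. The paper is equally terse on this: its proof sketch and its example table both work only with the four minimal pairs. So your plan matches the paper's level of rigor, but if you want the uniqueness claim to be airtight you should either extend the enumeration over all $(\mu,\nu)\subset(2^k)$ or invoke the inequality underlying (\ref{Milhoratform1}) to argue that $c_\lambda=c_0$ forces any spinor summand occurring in the branching of $V_\lambda$ to already be one of the four minimal ones.
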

\begin{proof}
Due to (\ref{koik}) the smallest contribution to the eigenspace comes from the biggest contribution in the tensor product. If we take the weights from conjecture \ref{repsmallgencon} the ordering $\prec$ of the weights then sorts out the smallest one, i.e., the one with $\lambda_1\leq \ell$. A straightforward calculation shows that $c_{\lambda^0}^{\text{Eucl}}=3 k\ell^2+k^2\ell-k\ell - \tfrac{2}{3}\ell(\ell^2-1)$. 
\end{proof}

\begin{example}
Consider the case $\ell=2$. The representations which we obtain by decomposing the tensor products $(\mu|\nu)$ from (\ref{repsmall}) are given in the next table where we also added $c_\lambda$.
\begin{equation*}
\begin{array}{l|l}
\lambda			& c_\lambda^{\text{Eucl}}  \\\hline
(4,3,2^{k-4},1,0^3) 	&2k^2+16k	\\
(4,2^{k-2},0^3)		&2k^2+14k+4	\\
(4,2^{k-3},1^2,0^2)	&2k^2+14k	\\
(3^2,2^{k-3},0^3)		&2k^2+14k	\\
(3,2^{k-2},1,0^2)		&2k^2+12k	\\
(3,2^{k-3}1^3,0)		&2k^2+12k-6	\\
(2^k,0^2)		&2k^2+10k	\\
(2^{k-1}1^2,0)		&2k^2+10k-4	
\end{array}
\end{equation*}
We see, that in this case $\lambda^0$ is unique, if we restrict to the  weights coming from tensor products. 
\end{example}

\section{Concluding remarks}\label{outlook}

{\em Remark on the general proof of (\ref{master}) and (\ref{master2}).\ }\ 
In principle, all we need  for the proof of conjectures \ref{decspinorgrass} and \ref{decspinorgrassodd} for fixed $\ell\geq 3$ is appendix \ref{applemma} and the dimension formulas (\ref{Littlewood}) and (\ref{Littlewood2}). This worked very well in the cases $\ell\leq 2$.
In (\ref{master}) and (\ref{master2}) the dimension of $V_\lambda$ for $\lambda=(\lambda_1,\ldots,\lambda_\ell)$ is a polynomial depending on the $\ell$ entries. 
If we use  (\ref{lambdaC}) to describe $\lambda^{c(\ell,k)}$ and insert the dimension formulas (\ref{Littlewood}) or (\ref{Littlewood2}) as well as (\ref{fkpq})   into  (\ref{master}) or (\ref{master2}) we end up with sums of at most $\ell$-fold products of terms of the form $B(k+1,m)$ with $m$ increasing for $\ell$ increasing.
For large $\ell$ -- beside the calculation of $B(k+1,m)$ for large $m$ (which may be done by  computer using the generating function (\ref{generating})) --  the main difficulties arise when we try to describe uniformly the $\mu$ such that $LR_{\sigma(\alpha),\mu}^{\lambda^c}$ do not vanish. In particular, this problem limits the practical application of our ansatz for the proof. 

{\em Remark on the remaining case $\mathfrak{so}(2k)\oplus\mathfrak{so}(2\ell+1)\subset \mathfrak{so}(4k\ell+2k)$.\ }\ 
One thing which has been left out in section \ref{sectionX} is the decomposition of the spin representation of $SO(4k\ell+2k)$ with respect to the subgroup $SO(2k)\times SO(2\ell+1)$ where the embedding is such that the vector representation decomposes as $\mathbf{2k}\otimes(\mathbf{2\ell+1})$. We did not mention this because the symmetric space $G_{2k,2\ell+1}$ is not spin. Nevertheless on the level of representation the question of branching is interesting as well. We will only state the result which is obtained by the same method as used in the two spin cases. In particular, the cases $(2k,3)$, $(2k,5)$, and $(4,2\ell+1)$ can be proven in the same way and the remark extends to this case, too.

The spin representation $S=S^+\oplus S^-$ of $\mathfrak{so}(4k\ell+2k)$ branches with respect to $\mathfrak{so}(2k)\oplus\mathfrak{so}(2\ell+1)$ into the sum of the following representations:
\[
( \lambda+\tfrac{1}{2}\vec{e}{\,}^\pm  | \lambda^c) \ \text{ with }\ \lambda=(\lambda_1\ldots,\lambda_k) \subset(\ell^k)\,,
\]
where $\lambda+\tfrac{1}{2}\vec{e}{\,}^\pm=(\lambda_1+\tfrac{1}{2},\ldots,\lambda_{k-1}+\tfrac{1}{2},\pm(\lambda_k+\tfrac{1}{2}))$ belongs to the subrepresentation $S^\pm$.

{\bf Acknowledgments:\ }\ I would like to thank Lorenz Schwachh\"ofer for useful comments and Doron Zeilberger for suggesting a method to calculate (\ref{BNK}) for  $k$ even without  generating functions.

\begin{appendix}

\section{Useful binomial identities}\label{applemma}

We define sums 
\begin{equation}\label{BNK}
B(n,k):=\sum_{j=0}^nj^k\binom{2n}{n-j}\,,
\end{equation}
such that lemma \ref{lemma4} in these terms reads as
\begin{equation}
2B(n,4)B(n,0)-2B(n,2)^2-\binom{2n}{n}B(n,4)=2^{4n-3}n(2n-1)\,.
\tag{\ref{lemmaeq}}
\end{equation}
We provide the tools to calculate (\ref{BNK}) for all values of $n$ and $k$ although for the proof of (\ref{lemmaeq}) and (\ref{oddlequal2}) we only need them for values less than 6.
 
We consider polynomials
\begin{equation}\label{polypn}
P_n(x):=\sum_{i=0}^n\binom{2n}{n-i}x^i\,,
\end{equation}
which obey  
\begin{equation}\label{BP}
\begin{split}
P_n(1)		&=B(n,0)\,,\\
P_n'(1)		&=B(n,1)\,,\\
P_n''(1)		&=B(n,2)-B(n,1)\,,\\
P_n'''(1)		&=B(n,3)-3B(n,2)+2B(n,1)\,,\\
P_n''''(1)	&=B(n,4)-6B(n,3)+11B(n,2)-6B(n,1)\,,\\
P_n'''''(1)	&=B(n,5)-10B(n,4)+35B(n,3)-50B(n,2)+24B(n,1)\,,
\end{split}
\end{equation}
and 
\begin{align*}
P_{n+1}(x)& =\sum_{j=0}^{n+1}\binom{2n+2}{n+1-j}x^j \\
&= \sum_{j=0}^{n+1}\big( \binom{2n}{n-j+1}+2 \binom{2n}{n-j}+ \binom{2n}{n-j-1}  \big)x^j \\
&= x\sum_{j=0}^{n} \binom{2n}{n-j} x^{j} 
	+2 \sum_{j=0}^{n}\binom{2n}{n-j}x^j
	+ \frac{1}{x}\sum_{j=0}^{n}\binom{2n}{n-j}  x^{j}\\
& \quad +\binom{2n}{n+1}-\frac{1}{x}\binom{2n}{n} \\
&=\frac{(x+1)^2}{x}P_n(x) +\frac{x-1}{x}\binom{2n}{n}-\frac{1}{n+1}\binom{2n}{n} \,.
\end{align*}
\begin{proposition}
The generating function $Q$ of the polynomials $P_n$ cf.\ (\ref{polypn}) is given by 
\begin{equation}\label{generating}
Q(x,y)=\frac{y(x-1)}{(x-y(x+1)^2)\sqrt {1-4y}} 
	+\frac{x(\sqrt{1-4y} +1)}{2(x-y(x+1)^2)} \,.
\end{equation}

\end{proposition}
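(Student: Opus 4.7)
The plan is to convert the recurrence derived immediately above the proposition into a linear functional equation for $Q(x,y)=\sum_{n\geq 0}P_n(x)\,y^n$ and to solve it using two classical generating functions. Concretely, the recurrence
\[
P_{n+1}(x) = \frac{(x+1)^2}{x}P_n(x) + \frac{x-1}{x}\binom{2n}{n} - \frac{1}{n+1}\binom{2n}{n},
\]
together with $P_0(x)=1$, determines $Q(x,y)$ uniquely once summed against $y^{n+1}$.

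First I would multiply both sides by $y^{n+1}$ and sum over $n\geq 0$. The left-hand side becomes $Q(x,y)-1$; the homogeneous part on the right contributes $\tfrac{y(x+1)^2}{x}Q(x,y)$. The two inhomogeneous series are then evaluated via the classical identities
\[
\sum_{n\geq 0}\binom{2n}{n}y^n = \frac{1}{\sqrt{1-4y}},\qquad \sum_{n\geq 0}\frac{1}{n+1}\binom{2n}{n}y^n = \frac{1-\sqrt{1-4y}}{2y},
\]
the second being the generating function for the Catalan numbers. After multiplying each by an extra $y$ to absorb the shift in index, one gets
\[
\sum_{n\geq 0}\binom{2n}{n}y^{n+1} = \frac{y}{\sqrt{1-4y}},\qquad \sum_{n\geq 0}\frac{1}{n+1}\binom{2n}{n}y^{n+1} = \frac{1-\sqrt{1-4y}}{2}.
\]

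Substituting and collecting all occurrences of $Q(x,y)$ on the left yields
\[
\left(1-\frac{y(x+1)^2}{x}\right)Q(x,y) = 1 - \frac{1-\sqrt{1-4y}}{2} + \frac{y(x-1)}{x\sqrt{1-4y}} = \frac{1+\sqrt{1-4y}}{2} + \frac{y(x-1)}{x\sqrt{1-4y}}.
\]
Multiplying through by $\tfrac{x}{x-y(x+1)^2}$ splits the right-hand side into precisely the two fractions displayed in (\ref{generating}), completing the derivation.

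There is no genuine obstacle here beyond careful bookkeeping; the only non-routine input is recognising the Catalan generating function attached to the $\tfrac{1}{n+1}\binom{2n}{n}$ term, after which the whole argument collapses to an elementary linear equation in $Q(x,y)$. Uniqueness of the formal power series solution, together with convergence in a neighbourhood of $y=0$ for each fixed $x\neq 0$, justifies the equality of the two analytic expressions on a common domain.
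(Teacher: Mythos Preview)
Your proof is correct and follows essentially the same route as the paper: both convert the recurrence into a linear equation for $Q(x,y)$ using the generating function $\sum\binom{2n}{n}y^n=(1-4y)^{-1/2}$ and the Catalan series, then solve for $Q$. The only cosmetic difference is that the paper obtains the Catalan generating function by integrating $(1-4y)^{-1/2}$, whereas you quote it directly.
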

The function $Q$ obeys
\begin{equation}
\frac{\partial^{k+n}Q}{\partial x^k \partial y^n} (1,0)= n!P^{(k)}_n(1)\,,
\end{equation}
such that an explicit description for $Q$ allows us  to calculate all $B(n,k)$ which then proves lemma \ref{lemma4}.
\begin{proof}
We define
\begin{equation}\label{QP}
Q(x,y):=\sum_{n\geq 0} P_n(x)y^n
\end{equation}
and  use the above recursion formula for $P_n$ as well as 
\begin{align*}
(1-4y)^{-\frac{1}{2}}&=\sum_{m\geq0} \frac{\frac{1}{2}(\frac{1}{2}+1)\ldots(\frac{1}{2}+m-1)}{m!}(4y)^m
= \sum_{m\geq0} \binom{2m}{m}y^m\,, 
\end{align*}
and
$ \displaystyle \int (1-4y)^{-\frac{1}{2}} = -\frac{1}{2}(1-4y)^\frac{1}{2} $
which yields
\begin{align*}
Q(x,y) &=1 + \sum_{n\geq1} P_n(x)y^n
\ =\ 1 	+ y\sum_{n\geq0} P_{n+1}(x)y^{n} \\
&= 1	+\frac{y(x+1)^2}{x}\sum_{n\geq0} P_n(x) y^n
	+\frac{y(x-1)}{x}\sum_{n\geq0} \binom{2n}{n}y^n \\
&\quad 	-y \sum_{n\geq0} \frac{1}{n+1}\binom{2n}{n}y^n \\
&= 	\frac{y(x+1)^2}{x}Q(x,y) \\
&\quad	+1 +\frac{y(x-1)}{x}\sum_{n\geq0}\binom{2n}{n}y^n
	- \int_0^y \sum_{n\geq0} \binom{2n}{n}t^n  dt \\
&=  	\frac{y(x+1)^2}{x}Q(x,y) +1 
	+\frac{y(x-1)}{x\sqrt {1-4y}}
	+\frac{1}{2}\sqrt{1-4y} -\frac{1}{2} \,,
\end{align*}
from which we get (\ref{generating}).
\end{proof}
The derivatives of $Q$ with respect to $x$ at the point $(1,y)$ are 
\begin{align*}
Q(1,y) &= \frac{1}{2}(1-4y)^{-\frac{1}{2}}+ \frac{1}{2}(1-4y)^{-1}\,, \\
\frac{\partial Q}{\partial x}(1,y) &= y(1-4y)^{-\frac{3}{2}} \,,\\
\frac{\partial^2 Q}{\partial x^2}(1,y) &=-y(1-4y)^{-\frac{3}{2}}+y(1-4y)^{-2}\,,  \\
\frac{\partial^3 Q}{\partial x^3}(1,y) &=3y(1-4y)^{-\frac{3}{2}}+6y^2(1-4y)^{-\frac{5}{2}}-3y(1-4y)^{-2}\,,  \\
\frac{\partial^4 Q}{\partial x^4}(1,y) &= -3y(1-4y)^{-\frac{3}{2}}-9y(1-4y)^{-\frac{5}{2}}+9y(1-4y)^{-2} +3y(1-4y)^{-3}\,,\\
\frac{\partial^5 Q}{\partial x^5}(1,y) &= 120y^3(1-4y)^{-\frac{7}{2}}-60y(1-4y)^{-2}+60y(1-4y)^{-\frac{5}{2}}-120y^2(1-4y)^{-3}.
\end{align*}
All terms are of the form $y^m(1-4y)^{-\ell}$ with $m=0,1,2,3$ and the needed derivatives are  given by
\begin{align*}
\frac{d^n}{dy^n}(1-4y)^{-\ell}\Big|_{y=0}
	&=\begin{cases}
		\displaystyle 2^{2n}\frac{(\ell+n-1)!}{(\ell-1)!}\,,  & \ell\in \n \\[1.5ex]
		\displaystyle \frac{(2n+2\ell-1)!  (\ell-\frac{1}{2})!}{(n+\ell-\frac{1}{2})!(2\ell-1)!}\,, & \ell\in \n+\frac{1}{2}
	    \end{cases}\\
\intertext{as well as}
\frac{d^n}{dy^n}(y^k(1-4y)^{-\ell})\Big|_{y=0}&
=\begin{cases} 
	\displaystyle \binom{n}{k}k! \frac{d^{n-k}}{dy^{n-k}}(1-4y)^{-\ell}\Big|_{y=0}&\  k\leq n\\[1.5ex]
	0 & \text{ else}
\end{cases}
\end{align*}
Therefore the partial derivatives of $Q$ with respect to $y$  at $(1,0)$ are
\begin{align*}
\frac{\partial^n Q}{\partial y^n}(1,0) 
	&= 2^{2n-1}n!+2^{-1}\frac{(2n)!}{n!} \,,\\
\frac{\partial^{n+1} Q}{\partial y^n\partial x}(1,0) 
	&= 2^{-1}\frac{(2n)!}{n!}n \,,\\
\frac{\partial^{n+2} Q}{\partial y^n\partial x^2}(1,0) 
	&=  -2^{-1}\frac{(2n)!}{n!}n+2^{2n-2} n!n\,,\\
\frac{\partial^{n+3} Q}{\partial y^n\partial x^3}(1,0) 
	&= 2 ^{-1}\frac{(2n)!}{n!}n( n+2)  -3\cdot 2^{2n-2} n!n  \,,\\
\frac{\partial^{n+4} Q}{\partial y^n\partial x^4}(1,0) 
	&= -3 \frac{(2n)!}{n!}n( n+1)  +3\cdot 2^{2n-3}n!n(n+7)\,,\\
\frac{\partial^{n+5} Q}{\partial y^n\partial x^5}(1,0) 
	&=  \frac{(2n)!}{n!}n( n^2+17n+12)  -15\cdot 2^{2n-2}n! n(n+3) \,.	
\end{align*}
From this we get with (\ref{BP}) and  (\ref{QP}):
\begin{equation}\label{resBNK}\begin{aligned}
B(n,0)	&
		=2^{2n-1}+\frac{1}{2}\binom{2n}{n}\,, &\qquad
B(n,1)	&
		=\frac{1}{2}\binom{2n}{n}n\,,		\\
B(n,2)	&
		=2^{2n-2}n\,,			&\qquad
B(n,3)	&
		=\frac{1}{2}\binom{2n}{n}n^2\,,		\\
B(n,4)	&
		=2^{2n-3}n (3n-1) \,,		&\qquad
B(n,5)	&
		=\frac{1}{2}\binom{2n}{n}n^2(2n-1)\,,
\end{aligned}\end{equation}
which -- last but not least -- yields lemma \ref{lemma4}.

A variant  of (\ref{BNK}) is used in the proof of theorem \ref{decspinorgrassoddpart}. We define
\begin{equation}\label{Btilde}
\widetilde{B}(n,k):=\sum_{j=0}^{n}j^k\binom{2n-1}{n-j}\,.
\end{equation}
Then usual manipulation of the binomials yields the following lemma
\begin{lemma}
The sums $B(n,k)$ and $\widetilde{B}(n,k)$ are connected via
\begin{equation}\label{resBtilde}
2n\widetilde{B}(n,k)=n B(n,k)+B(n,k+1)\,.
\end{equation}
\end{lemma}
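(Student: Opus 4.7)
The identity to establish is purely algebraic: it relates two weighted sums of binomial coefficients, one with upper index $2n$ and one with upper index $2n-1$. My plan is to derive it from a single pointwise identity between the two binomials, applied inside the sum.

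The key observation is the elementary rewrite
\[
\binom{2n-1}{n-j}
	=\frac{(2n-1)!}{(n-j)!(n-1+j)!}
	=\frac{n+j}{2n}\cdot\frac{(2n)!}{(n-j)!(n+j)!}
	=\frac{n+j}{2n}\binom{2n}{n-j}\,,
\]
valid for every $j$ with $0\leq j\leq n$ (no degenerate denominators occur in the stated range). Multiplying by $2n\,j^{k}$ and summing over $j$ from $0$ to $n$ yields
\[
2n\,\widetilde{B}(n,k)
	=\sum_{j=0}^{n} j^{k}(n+j)\binom{2n}{n-j}
	=n\sum_{j=0}^{n} j^{k}\binom{2n}{n-j}+\sum_{j=0}^{n} j^{k+1}\binom{2n}{n-j}\,,
\]
which is exactly $nB(n,k)+B(n,k+1)$ by the definition (\ref{BNK}).

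There is essentially no obstacle here: the only point requiring a line of attention is the pointwise binomial identity, which is a direct factorial manipulation. Once that is in hand, linearity of the summation delivers the lemma immediately, with no need to invoke the generating function $Q(x,y)$ or the explicit values of $B(n,k)$ computed earlier in the appendix.
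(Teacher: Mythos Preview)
Your proof is correct and is precisely the ``usual manipulation of the binomials'' that the paper invokes without spelling out: the pointwise identity $\binom{2n-1}{n-j}=\frac{n+j}{2n}\binom{2n}{n-j}$ followed by summation in $j$ gives the claim immediately. There is nothing to add.
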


\section{Calculations for the proof of (\ref{oddlequal2}).}\label{appenB}

For the proof of (\ref{oddlequal2}) we expand the left hand side and notice that we can extend the sums for $b$, because the added summands cancel or vanish. This yields
\begin{align*}
  &6(2k+1) \sum_{0\leq q\leq p\leq k} \dim V^{\mathfrak{sp}(2k)}_{(2^{k-p},1^{p-q},0^q)}\dim V^{\mathfrak{sp}(4)}_{(p,q)} \displaybreak[2]\\
=&   \sum_{p=0}^k\sum_{q=0}^p
	(p-q+1 )^2( p+q+3)(p+2)(q+1)\binom{2k+1}{k-q+1}\binom{2k+1}{k-p} \displaybreak[2]\\
  &- \sum_{p=0}^k\sum_{q=0}^p
	(p-q+1 )^2( p+q+3)(p+2)(q+1)\binom{2k+1}{k-q-2}\binom{2k+1}{k-p-3}\displaybreak[2] \\
  &+ \sum_{p=0}^k\sum_{q=0}^p\sum_{b=1}^3
	(p-q+1 )(p-q-3+2b)( p+q+3)(p+2)(q+1)\cdot \\
  &\qquad	\cdot\binom{2k+1}{k-q+b-3}\binom{2k+1}{k-p-b}\displaybreak[2] \\
  &- \sum_{p=0}^k\sum_{q=0}^p\sum_{b=0}^2
	(p-q+1 )(p-q-1+2b)( p+q+3)(p+2)(q+1)\cdot \\
&\qquad 		\cdot\binom{2k+1}{k-q+b-1}\binom{2k+1}{k-p-b} \displaybreak[2] \\
=&   \sum_{p=0}^k\sum_{q=0}^p
	(p-q+1)^2( p+q+3)(p+2)(q+1)\binom{2k+1}{k-q+1}\binom{2k+1}{k-p}\displaybreak[2]  \\
  &- \sum_{p=0}^k\sum_{q=0}^p
	(p-q+1)^2( p+q+3)(p+2)(q+1)\binom{2k+1}{k-q-2}\binom{2k+1}{k-p-3} \displaybreak[2]\\
  &+ \sum_{p=0}^k\sum_{q=0}^p\sum_{b=0}^2
	(p-q+1)(p-q-1+2b)( p+q+3)(p+2)(q+1)\cdot \\
&\qquad 		\cdot \binom{2k+1}{k-q+b-2}\binom{2k+1}{k-p-b-1}\displaybreak[2] \\ 
  &- \sum_{p=0}^k\sum_{q=0}^p\sum_{b=0}^2
	(p-q+1)(p-q-1+2b)( p+q+3)(p+2)(q+1)\cdot \\
&\qquad 		\cdot \binom{2k+1}{k-q+b-1}\binom{2k+1}{k-p-b}\,.
\end{align*}
With
\[
\binom{n}{m}\binom{n}{\ell}-\binom{n}{m-1}\binom{n}{\ell-1}=\frac{n-m-\ell+1}{n+1}\binom{n+1}{m}\binom{n+1}{\ell}
\]
we get
\begin{align*}
=&   \sum_{p=0}^k\sum_{q=0}^p
	(p-q+1)^2( p+q+3)(p+2)(q+1)\binom{2k+1}{k-q+1}\binom{2k+1}{k-p} \displaybreak[2]\\
  &- \sum_{p=0}^k\sum_{q=0}^p
	(p-q+1)^2( p+q+3)(p+2)(q+1)\binom{2k+1}{k-q-2}\binom{2k+1}{k-p-3}\displaybreak[2] \\
  &- \sum_{p=0}^k\sum_{q=0}^p\sum_{b=0}^2
	\frac{(p-q+1)(p-q-1+2b)( p+q+3)^2(p+2)(q+1)}{2k+2}\cdot \\
&\qquad 		\cdot\binom{2k+2}{k-q+b-1}\binom{2k+2}{k-p-b}\displaybreak[2]\\
=&   \sum_{p=0}^{k+1}\sum_{q=0}^p
	(p-q)^2( p+q+2)(p+1)(q+1)\binom{2k+1}{k-q+1}\binom{2k+1}{k-p+1}\displaybreak[2] \\
  &- \sum_{p=0}^{k+1}\sum_{q=0}^p
	(p-q)^2( p+q+2)(p+1)(q+1)\binom{2k+1}{k-q-2}\binom{2k+1}{k-p-2}\displaybreak[2] \\
  &- \sum_{p=1}^{k+1}\sum_{q=1}^{p+2}
	\frac{(p-q+2)(p-q)( p+q)^2(p+1)(q-1)}{2k+2}\binom{2k+2}{k-q+1}\binom{2k+2}{k-p+1}\displaybreak[2]\\
  &- \sum_{p=2}^{k+1}\sum_{q=0}^{p}
	\frac{(p-q)^2( p+q)^2 pq}{2k+2}\binom{2k+2}{k-q+1}\binom{2k+2}{k-p+1}\displaybreak[2]\\
  &- \sum_{q=3}^{k+1}\sum_{p=0}^{q-2}
	\frac{(p-q+2)(p-q)(q+p)^2(q-1)(p+1)}{2k+2}\binom{2k+2}{k-q+1}\binom{2k+2}{k-p+1}\,.
\end{align*}
We use the symmetry of the summands with respect to $p$ and $q$ so that we arrive at
\begin{align*}
=&   \frac{1}{2}\sum_{p=0}^{k+1}\sum_{q=0}^{k+1}
	(p-q)^2( p+q+2)(p+1)(q+1)\binom{2k+1}{k-q+1}\binom{2k+1}{k-p+1} \\
  &- \frac{1}{2}\sum_{p=0}^{k+1}\sum_{q=0}^{k+1}
	(p-q)^2( p+q-4)(p-2)(q-2)\binom{2k+1}{k-q+1}\binom{2k+1}{k-p+1}\displaybreak[2] \\
  &- \sum_{p=0}^{k+1}\sum_{q=0}^{k+1}
	\frac{(p-q+2)(p-q)( p+q)^2(p+1)(q-1)}{2(k+1)}\binom{2k+2}{k-q+1}\binom{2k+2}{k-p+1}\displaybreak[2]\\
  &- \sum_{p=0}^{k+1}\sum_{q=0}^{k+1}
	\frac{(p-q)^2( p+q)^2 pq}{4(k+1)}\binom{2k+2}{k-q+1}\binom{2k+2}{k-p+1}\displaybreak[2]\\
  &- \sum_{p=0}^{k+1}
	\frac{p^3(p+1)(p+2)}{2(k+1)}\binom{2k+2}{k-p+1}\binom{2k+2}{k+1}\displaybreak[2]\\
  &-2 \sum_{p=0}^{k+1}
	p^2(p-4)(p-2)\binom{2k+1}{k+1}\binom{2k+1}{k-p+1}\displaybreak[2] \\  
  &- \sum_{p=0}^{k+1}
	(p-1)^2( p-3)(p-2)\binom{2k+1}{k}\binom{2k+1}{k-p+1}\displaybreak[2] \\
  &+6 \binom{2k+1}{k}\binom{2k+1}{k+1}\displaybreak[2] \\
=&   \frac{1}{2}\sum_{p=0}^{k+1}\sum_{q=0}^{k+1}
	  (p-q)^2\big(( p+q+2)(p+1)(q+1)-( p+q-4)(p-2)(q-2)\big)\cdot \\
 &\qquad  	\cdot\binom{2k+1}{k-q+1}\binom{2k+1}{k-p+1}\displaybreak[2] \\
 &- \frac{1}{4(k+1)}\sum_{p=0}^{k+1}\sum_{q=0}^{k+1}
	 (p+q)^2(p-q)\big( 2(p-q+2)(p+1)(q-1)+(p-q) pq\big)\cdot \\
 &\qquad 	\cdot\binom{2k+2}{k-q+1}\binom{2k+2}{k-p+1}\displaybreak[2]\\
 &- \frac{1}{2(k+1)}\binom{2k+2}{k+1}\sum_{p=0}^{k+1}
	p^3(p+1)(p+2)\binom{2k+2}{k-p+1}\displaybreak[2]\\
 &-\binom{2k+1}{k}\sum_{p=0}^{k+1}
	 (p-2)(2p^2(p-4)+(p-1)^2( p-3)\binom{2k+1}{k-p+1}\,.\displaybreak[2] \\
&+6\binom{2k+1}{k}^2
\end{align*}
We expand the polynomial coefficients and write the last sum in Terms of $\widetilde{B}$ and $B$:
\begin{align*}
=& 	     \widetilde{B}(k+1,4)\widetilde{B}(k+1,0)
	+6\widetilde{B}(k+1,3)\widetilde{B}(k+1,1)
  	-9 \widetilde{B}(k+1,2)^2\\
&	-9 \widetilde{B}(k+1,3)\widetilde{B}(k+1,0)
	+9 \widetilde{B}(k+1,2)\widetilde{B}(k+1,1)\\
&	+18 \widetilde{B}(k+1,2)\widetilde{B}(k+1,0)
	-18 \widetilde{B}(k+1,1)^2\\
&-\frac{1}{2(k+1)}\Big(
	  3   B(k+1,5)B(k+1,1)
	-6 B(k+1,4)B(k+1,0) 
	-3  B(k+1,3)^2\\
&	+6 B(k+1,2)^2  
+ \binom{2k+2}{k+1}\big( 
	     B(k+1,5)
	+3 B(k+1,4)
	+2 B(k+1,3) \big)\Big) \\
&-\binom{2k+1}{k}\Big(
	 2    \widetilde{B}(k+1,4)
	-12 \widetilde{B}(k+1,3)
	+16  \widetilde{B}(k+1,2) 
	+    \widetilde{B}(k+1,4)\\
&	-7 \widetilde{B}(k+1,3)
	+17\widetilde{B}(k+1,2)
	-17\widetilde{B}(k+1,1)
	+6  \widetilde{B}(k+1,0)  \Big) \\
& +6 \binom{2k+1}{k}\binom{2k+1}{k+1}\,.
\end{align*}
Inserting (\ref{resBtilde}) as well as (\ref{resBNK}) from appendix \ref{applemma} yields (\ref{oddlequal2}) which finishes the proof of theorem \ref{decspinorgrassoddpart}.

\end{appendix}

\end{document}